\newcommand{\clconv}{\overline{\mathrm{conv}}}
\newcommand{\cov}{\mathrm{cov}}
\newcommand{\covJ}{\mathrm{cov}{}\kern-2pt{}_{\mathcal J}}
\newcommand{\e}{\varepsilon}
\newcommand{\IR}{\mathbb R}
\newcommand{\IZ}{\mathbb Z}
\newcommand{\w}{\omega}
\newcommand{\HH}{\mathcal H}
\newcommand{\F}{\mathcal F}
\newcommand{\C}{\mathcal C}
\newcommand{\Is}{\mathsf{Is}}
\newcommand{\is}{\mathsf{is}}
\newcommand{\I}{\mathcal I}
\newcommand{\U}{\mathcal U}
\newcommand{\supp}{\mathrm{supp}}
\newcommand{\PP}{\mathcal P}
\newtheorem{theorem}{Theorem}
\newtheorem{corollary}{Corollary}
\newtheorem{problem}{Problem}
\newtheorem{lemma}{Lemma}
\theoremstyle{definition}
\newtheorem{remark}{Remark}
\title[Difference sets in partitions of $G$-spaces and groups]{The covering number of the difference sets\\ in partitions of $G$-spaces and groups}
\author{Taras Banakh and Mikolaj Fr\c aczyk}
\address{T.Banakh: Ivan Franko National University of Lviv (Ukraine) and Jan Kochanowski University in Kielce (Poland)}
\email{t.o.banakh@gmail.com}
\address{M.Fr\c aczyk: Institute of Mathematics, Jagiellonian University, Krak\'ow (Poland) and Institut Galil\'ee, Universite Paris 13, Paris (France)}
\email{mikolaj.fraczyk@gmail.com}
\keywords{$G$-space, difference set, covering number, compact right topological semigroup, minimal measure, idempotent measure, quasi-invariant measure}
\subjclass{05E15, 05E18, 28C10}
\begin{document}
\begin{abstract} We prove that for every finite partition $G=A_1\cup\dots\cup A_n$ of a group $G$ either $\cov(A_iA_i^{-1})\le n$ for all cells $A_i$ or else $\cov(A_iA_i^{-1}A_i)<n$ for some cell $A_i$ of the partition. Here $\cov(A)=\min\{|F|:F\subset G,\;G=FA\}$ is the covering number of $A$ in $G$. A similar result is proved also of partitions of $G$-spaces. This gives two partial answers to a problem of Protasov posed in 1995.
\end{abstract}

\maketitle

This paper was motivated by the following problem posed by I.V.Protasov in Kourovka Notebook \cite{Kourov}.

\begin{problem}[Protasov, 1995]\label{prob1} Is it true that for any partition $G=A_1\cup\dots\cup A_n$ of a group $G$ some cell $A_i$ of the partition has $\cov(A_iA_i^{-1})\le n$?
\end{problem}

Here for a non-empty subset $A\subset G$ by
$$\cov(A)=\min\{|F|:F\subset G,\;\;G=FA\}$$we denote the {\em covering number} of $A$.

In fact, Protasov's Problem can be posed in a more general context of ideal $G$-spaces. Let us recall that a {\em $G$-space} is a set $X$ endowed with an action $G\times X\to X$, $(g,x)\mapsto gx$, of a group $G$. An {\em ideal $G$-space} is a pair $(X,\I)$ consisting of a $G$-space $X$ and a $G$-invariant Boolean ideal $\I\subset\mathcal B(X)$ in the Boolean algebra $\mathcal B(X)$ of all subsets of $X$. A  {\em Boolean ideal\/} on $X$ is a proper subfamily $\I\varsubsetneqq\mathcal B(X)$ such that for any  $A,B\in\mathcal I$ any subset $C\subset A\cup B$ belongs to $\I$. A Boolean ideal $\I$ is {\em $G$-invariant} if $\{gA:g\in G,\;A\in\I\}\subset\I$.
A Boolean ideal $\I\subset \mathcal B(G)$ on a group $G$ will be called {\em invariant} if $\{xAy:x,y\in G,\;\;A\in\I\}\subset \I$.
By $[X]^{<\w}$ and $[X]^{\le\w}$ we denote the families of all finite and countable subsets of a set $X$, respectively. The family $[X]^{<\w}$ (resp. $[X]^{\le \w}$) is a Boolean ideal on $X$ if $X$ is infinite (resp. uncountable).

For a subset $A\subset X$ of an ideal $G$-space $(X,\mathcal I)$ by
$$\Delta(A)=\{g\in G:gA\cap A\ne\emptyset\}\mbox{ \ and \ }\Delta_\I(A)=\{g\in G:gA\cap A\notin\I\}$$we denote the {\em difference set} and {\em $\I$-difference set} of $A$, respectively.

Given a Boolean ideal $\mathcal J$ on a group $G$ and two subsets $A,B\subset G$ we shall write $A=_{\mathcal J}B$ if the symmetric difference $A\triangle B=(A\setminus B)\cup(B\setminus A)$ belongs to the ideal $\mathcal J$. For a non-empty subset $A\subset G$ put $$\covJ(A)=\min\{|F|:F\subset G,\;FA=_{\mathcal J}G\}$$
be the {\em $\mathcal J$-covering number} of $A$. For the empty subset we put $\cov_{\mathcal J}(\emptyset)=\infty$ and assume that $\infty$ is larger than any cardinal number.

Observe that for the left action of the group $G$ on itself we get $\Delta(A)=AA^{-1}$  for every subset $A\subset G$. That is why Problem~\ref{prob1} is a partial case of the following general problem.

\begin{problem}\label{prob2} Is it true that for any partition $X=A_1\cup\dots\cup A_n$ of an ideal  $G$-space $X$ some cell $A_i$ of the partition has $\cov(\Delta_\I(A_i))\le n$?
\end{problem}

This problem has an affirmative answer for $G$-spaces with amenable acting group $G$, see \cite[4.3]{BPS}. The paper \cite{BPS} gives a survey of available partial solutions of Protasov's Problems~\ref{prob1} and \ref{prob2}. Here we mention the following result of Banakh, Ravsky and Slobodianiuk \cite{BRS}.

\begin{theorem} For any partition $X=A_1\cup\dots\cup A_n$ of an ideal $G$-space $(X,\I)$ some cell $A_i$ of the partition has $$\cov(\Delta_\I(A_i))\le \max_{0<k\le n}\sum_{p=0}^{n-k}k^p\le n!$$
\end{theorem}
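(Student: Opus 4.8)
The plan is to reduce the statement to a purely combinatorial packing problem and then run an induction on $n$ whose branching is governed by the recursion $c(n,k)=1+k\,c(n-1,k)$, where $c(n,k)=\sum_{p=0}^{n-k}k^p$ is exactly the quantity appearing in the bound.

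First I would reformulate the covering number through a duality with packings. Using $G$-invariance of $\I$ one checks that $f^{-1}g\in\Delta_\I(A)$ is equivalent to $gA\cap fA\notin\I$ (apply the bijection $f$ to $(f^{-1}g)A\cap A$); hence $G=F\,\Delta_\I(A)$ holds precisely when every translate $gA$ meets some $fA$ with $f\in F$ in a set not lying in $\I$. Consequently, if $\{fA:f\in F\}$ is a \emph{maximal} family of pairwise $\I$-disjoint translates of $A$ (so $fA\cap f'A\in\I$ for $f\ne f'$ and no further translate can be adjoined), then maximality forces every $gA$ to $\I$-meet some $fA$, giving $\cov(\Delta_\I(A))\le|F|$. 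Thus it suffices to exhibit one cell $A_i\notin\I$ that admits a maximal $\I$-disjoint family of translates of size at most $\max_{0<k\le n}c(n,k)$. Cells lying in $\I$ may be discarded at once, since for them $gA_i\cap A_i\subseteq A_i\in\I$ gives $\Delta_\I(A_i)=\emptyset$; and at least one cell is outside $\I$ because $\I$ is proper while $X=\bigcup_iA_i$.

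Next I would set up the induction. The value $c(n,k)$ counts the nodes of a complete $k$-ary tree of height $n-k$, and the identity $c(n,k)=1+k\,c(n-1,k)$ suggests building the covering family $F$ as such a tree of translates. Starting from the root $e$ (contributing $A_i$ itself), any translate still $\I$-disjoint from the current family satisfies $gA_i\cap A_i\in\I$, so $gA_i$ is, modulo $\I$, contained in the union of the remaining cells $\bigcup_{j\ne i}A_j$. Pulling back by $g$, the partition $\{A_j\}_{j\ne i}$ induces on $A_i$, modulo $\I$, a splitting into at most $n-1$ pieces, which should feed the inductive hypothesis at level $n-1$; the branching number $k$ records how many cells an uncovered translate can actually meet. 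Combining the root with the $k$ subtrees produced by the branches yields a family of size $1+k\,c(n-1,k)=c(n,k)$, and maximizing over $k$ gives the claimed bound, the final estimate $c(n,k)\le n!$ being an elementary computation.

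The main obstacle is to make the reduction ``from $n$ to $n-1$ cells'' respect both the $G$-space structure and the ideal. The naive subspace $\bigcup_{j\ne i}A_j$ is not $G$-invariant, so one cannot simply apply the theorem to it; I expect to circumvent this by recursing not on a subspace but on a translated/relativized partition of the whole space, tracking which cell has been ``used up'' along each branch so that after at most $n-k$ descents no admissible uncovered translate can survive. Equally delicate is the ideal bookkeeping: one must ensure that ``$\I$-disjoint'' and ``not in $\I$'' are preserved along every branch, that the finitely many $\I$-small exceptional sets accumulated along a path still lie in $\I$ (which holds since $\I$ is closed under finite unions), and that a covering family built in the subproblem lifts back to a genuine covering family of the original translates. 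Verifying that this lifting is lossless — that maximality, and hence the covering property, is inherited through the recursion — is where I expect the real work to lie.
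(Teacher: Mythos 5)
The statement you are proving is quoted in this paper without proof --- it is imported from \cite{BRS} --- so the comparison is with the argument of that paper, and in spirit your skeleton matches it: the duality you state is correct (if $F$ is a maximal family with $fA\cap f'A\in\I$ for distinct $f,f'\in F$, then every $g\in G$ satisfies $gA\cap fA\notin\I$ for some $f\in F$, whence $G=F\cdot\Delta_\I(A)$, at least when $A\notin\I$), and the recursion $c(n,k)=1+k\,c(n-1,k)$ is indeed the combinatorial engine behind the bound $\max_{0<k\le n}\sum_{p=0}^{n-k}k^p$. But as written the proposal has two genuine gaps, both sitting exactly where you yourself say ``the real work'' lies, and neither is resolved by anything you write.

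First, the induction hypothesis is never formulated, and the theorem as stated cannot serve as one: it concerns partitions of the whole $G$-space $X$, whereas your recursion produces at each node $g$ a decomposition mod $\I$ of a single set, namely $A_i=_{\I}\bigcup_{j\ne i}(g^{-1}A_j\cap A_i)$, which is not a partition of $X$. Your proposed fix --- recursing on a ``translated/relativized partition of the whole space'' --- does not exist as described, since $\bigcup_{j\ne i}A_j$ is not $G$-invariant and the pullbacks differ from branch to branch. The actual repair (and what \cite{BRS} does, in the language of $G$-lattices) is to strengthen the statement to $\I$-covers $B=_\I B_1\cup\dots\cup B_m$ of an \emph{arbitrary} set $B$, and then to transfer the conclusion back to a cell of the original partition via the monotonicity $\Delta_\I(B')\subseteq\Delta_\I(A_i)$ for $B'\subseteq A_i$ and the equivariance $\Delta_\I(g^{-1}B')=g^{-1}\Delta_\I(B')\,g$ --- facts you never isolate, though they are what makes the ``lifting'' you worry about lossless. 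Second, the parameter $k$ is not pinned down: your count $1+k\,c(n-1,k)$ requires branching at most $k$ at \emph{every} node of a tree of depth $n-k$, for one and the same $k$, but your justification (``$k$ records how many cells an uncovered translate can actually meet'') allows branching up to $n-1$ at the root, which yields a bound of roughly factorial size rather than $\max_{0<k\le n}\sum_{p=0}^{n-k}k^p$. In the genuine argument $k$ is a threshold fixed in advance of the recursion: at each node one asks whether the current piece $B$ satisfies $\cov(\Delta_\I(B))\le k$ (closing the branch), and if not, maximality fails for every $k$-element $\I$-disjoint family, so one extracts $k+1$ pairwise $\I$-disjoint translates and refines; it is this cover-or-extract dichotomy that couples the branching number to the depth $n-k$ and produces the maximum over $k$. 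Without the strengthened hypothesis and this dichotomy, what you have is a plausible plan, not a proof.
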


In this paper we shall give another two partial solutions to Protasov's Problems~\ref{prob1} and \ref{prob2}.

\begin{theorem}\label{t1} For any partition $X=A_1\cup\dots\cup A_n$ of an ideal $G$-space $(X,\I)$ either
\begin{itemize}
\item $\cov(\Delta_\I(A_i))\le n$ for all cells $A_i$ or else
\item $\covJ(\Delta_\I(A_i))<n$ for some cell $A_i$ and some $G$-invariant ideal $\mathcal J\not\ni \Delta_\I(A_i)$ on $G$.
\end{itemize}
\end{theorem}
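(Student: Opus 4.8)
The plan is to pass from combinatorics to finitely additive measures and to exploit the semigroup structure on the space of means through an idempotent. Write $P_\I(X)$ for the weak$^*$-compact convex set of finitely additive probability measures on $\mathcal B(X)$ that vanish on $\I$ (nonempty since $\I$ is proper, so $X\notin\I$), and let $P(G)$ be the space of means on $G$, a compact right topological semigroup under convolution acting on $P_\I(X)$ by $(\mu\lambda)(A)=\int_G\lambda(g^{-1}A)\,d\mu(g)$. Using the Ellis--Numakura lemma I would choose a minimal idempotent $\mu=\mu*\mu\in P(G)$ and, among the measures it fixes, a measure $\lambda=\mu\lambda\in P_\I(X)$. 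The role of minimality and idempotency is to make $\lambda$ \emph{quasi-invariant}, so that
\[
\I_\lambda:=\{A\subseteq X:\lambda(gA)=0\text{ for all }g\in G\}
\]
is a $G$-invariant Boolean ideal with $\I\subseteq\I_\lambda$ and $X\notin\I_\lambda$. Since the $n$ cells cover $X$ and $\lambda(X)=1$, some cell $A_i$ has $\lambda(A_i)\ge 1/n$, hence $A_i\notin\I_\lambda$.

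The covering estimate would then come from a greedy/maximal-family argument. Fixing a cell $A_i\notin\I_\lambda$, choose $H\subseteq G$ maximal with the property that the translates $\{hA_i:h\in H\}$ are pairwise $\I_\lambda$-disjoint. Maximality together with $G$-invariance of $\I_\lambda$ forces, for every $g\in G$, some $h\in H$ with $gA_i\cap hA_i\notin\I_\lambda$; applying $h^{-1}$ gives $h^{-1}g\in\Delta_{\I_\lambda}(A_i)$, so $G=H\,\Delta_{\I_\lambda}(A_i)$. As $\I\subseteq\I_\lambda$ yields $\Delta_{\I_\lambda}(A_i)\subseteq\Delta_\I(A_i)$, we obtain $\cov(\Delta_\I(A_i))\le|H|$, and it remains to bound $|H|$ by reading off the masses of the translates.

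This is where the dichotomy should appear, governed by whether $\lambda$ is genuinely $G$-invariant. If $\lambda$ is $G$-invariant, then each translate has $\lambda(hA_i)=\lambda(A_i)$ and the $\I_\lambda$-disjoint translates are $\lambda$-almost disjoint, so finite additivity gives $|H|\le\lfloor 1/\lambda(A_i)\rfloor\le n$; running this for every cell yields $\cov(\Delta_\I(A_j))\le n$ for all $j$, the first alternative. (If moreover some cell carries mass $>1/n$ one even gets the honest bound $<n$, so the first alternative can only just hold, with all cells of equal mass $1/n$.) In the remaining case $\lambda$ is quasi-invariant but not invariant, and I would convert this defect into the second alternative: transport $\I_\lambda$ to a $G$-invariant ideal on $G$, a natural candidate being $\mathcal J:=\{B\subseteq G:BA_i\in\I_\lambda\}$, for which left $G$-invariance and the ideal axioms are routine and $\Delta_\I(A_i)\notin\mathcal J$ because $e\in\Delta_\I(A_i)$ while $A_i\notin\I_\lambda$; the imbalance among the masses $\lambda(gA_i)$ should then let one cover $G$ \emph{modulo} $\mathcal J$ by only $n-1$ translates of $\Delta_\I(A_i)$, giving $\covJ(\Delta_\I(A_i))<n$.

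The main obstacle is precisely this last step. Quasi-invariance produced by a minimal idempotent is only an averaged invariance, so the clean count $|H|\le\lfloor 1/\lambda(A_i)\rfloor$ may break, and one must turn a measure-theoretic surplus on $X$ (some translate being heavier than $A_i$) into a genuinely $G$-invariant smallness ideal $\mathcal J$ on $G$ along which one translate of $\Delta_\I(A_i)$ can be discarded, all while keeping $\Delta_\I(A_i)\notin\mathcal J$ and controlling the infinite unions $BA_i$ with only finite additivity at hand. Two further technical points to discharge are the cells lying in $\I$ (where $\Delta_\I(A_i)=\emptyset$, so such a cell can witness neither alternative and the analysis must be routed through a non-null cell), and the verification that a minimal idempotent really delivers quasi-invariance with respect to all of $G$ — the place where the compact right topological semigroup machinery, via minimal left ideals and the maximal subgroup at $\mu$, does the essential work.
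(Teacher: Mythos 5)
Your skeleton (finitely additive measures vanishing on $\I$, a minimal/idempotent element of the convolution semigroup, maximal families of almost-disjoint translates) is indeed the skeleton of the paper's proof, and your ``invariant'' case reproduces the paper's first case correctly. But there is a genuine gap exactly where you flag ``the main obstacle'': in the non-invariant case you only name a candidate ideal $\mathcal J=\{B\subseteq G: BA_i\in\I_\lambda\}$ and assert that ``the imbalance among the masses should let one cover $G$ modulo $\mathcal J$ by $n-1$ translates'' --- but that assertion \emph{is} the theorem, and nothing in your setup (idempotency, quasi-invariance, the maximal family) is brought to bear on it. Since minimal measures are invariant only for amenable $G$, this unproved case is the entire content. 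A secondary problem is that your dichotomy is taken along the wrong axis: what matters is not whether $\lambda$ is $G$-invariant but whether $\bar\lambda(A_i)=\sup_{x\in G}\lambda(xA_i)\le\frac1n$ for \emph{all} $i$. If so, then $1=\sum_{i}\lambda(xA_i)\le\sum_i\bar\lambda(A_i)\le 1$ forces $\lambda(xA_i)=\frac1n$ for every $x$ and $i$, and your counting argument gives $\cov(\Delta_\I(A_i))\le n$ for every cell with no invariance hypothesis at all; this also disposes of your worry about cells lying in $\I$, since such cells have all translates $\lambda$-null and hence cannot occur in this case. The hard case is when some $\bar\lambda(A_i)>\frac1n$, and there your proposal stops.

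For comparison, here is how the paper closes that gap (its Theorem~\ref{t3}). Replace $\mu$ by an element of the compact orbit $P(G)*\mu=\clconv(G\cdot\mu)$ maximizing the mass of $A=A_i$, so that $\mu(A)=\bar\mu(A)$, and consider the level set $L=\{x\in G:\mu(xA)>\bar\mu(A)-\e\}$ for small $\e$. The maximal-family argument is applied to cover only $L$ (not all of $G$) by a set $F$ of at most $1/\bar\mu(A)<n$ translates of $\Delta_\I(A)$. The decisive use of minimality is: for every $\nu\in P(G)$ there is $\eta$ with $\eta*\nu^{-1}*\mu=\mu$; evaluating at $A$ and using that $\mu(A)$ is already the supremum of all $\mu(xA)$ forces $\eta*\nu^{-1}(L^{-1})=1$, whence $\sup_y\nu(Ly)=1$, i.e.\ $\Is_{12}(L)=1$. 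By Lemma~\ref{l2} no finite set of translates of $G\setminus L$ covers $G$, so $\mathcal J=\{B\subseteq G:\exists E\in[G]^{<\w}\;B\subseteq E(G\setminus L)\}$ is a proper $G$-invariant ideal; it does not contain $L$, hence not $\Delta_\I(A)$, and $G\setminus\big(F\cdot\Delta_\I(A)\big)\subseteq G\setminus L\in\mathcal J$ yields $\covJ(\Delta_\I(A))\le|F|<n$. This ``level set plus extremal density'' mechanism is what is missing from your proposal; your candidate ideal plays no role in it and there is no evident route from it to the $<n$ bound. Finally, the quasi-invariance you worry about is a red herring for Theorem~\ref{t1}: your $\I_\lambda$ is $G$-invariant by its very definition, and right quasi-invariant measures provably fail to exist on uncountable free groups (the paper's remark after Lemma~\ref{l3}); idempotent quasi-invariant measures are the paper's tool for Theorem~\ref{t2}, applied on countable subgroups together with a stationarity argument, not for Theorem~\ref{t1}.
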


\begin{corollary}\label{c1} For any partition $X=A_1\cup\dots\cup A_n$ of an ideal $G$-space $(X,\I)$ either $\cov(\Delta_\I(A_i))\le n$ for all cells $A_i$ or else  $\cov(\Delta_\I(A_i)\cdot\Delta_\I(A_i))<n$ for some cell $A_i$.
\end{corollary}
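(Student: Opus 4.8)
The plan is to derive Corollary~\ref{c1} directly from Theorem~\ref{t1}. The first alternatives of the two statements coincide verbatim, so I would assume that the second alternative of Theorem~\ref{t1} holds and deduce from it the second alternative of the corollary. Thus fix a cell $A_i$, write $D:=\Delta_\I(A_i)$ for brevity, and fix a $G$-invariant ideal $\mathcal J$ on $G$ with $D\notin\mathcal J$ and $\covJ(D)<n$. The goal is to prove $\cov(D\cdot D)<n$, for which it suffices to exhibit a set $F\subset G$ with $|F|<n$ and $G=F\cdot D\cdot D$.

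Before the main step I would record two elementary facts. First, the difference set $D$ is symmetric, $D=D^{-1}$: if $g\in D$ then $gA_i\cap A_i\notin\I$, and applying $g^{-1}$ together with the $G$-invariance of $\I$ gives $A_i\cap g^{-1}A_i\notin\I$, i.e.\ $g^{-1}\in D$. Second, the $G$-invariance of $\mathcal J$ combined with $D\notin\mathcal J$ yields $gD\notin\mathcal J$ for every $g\in G$ (otherwise $D=g^{-1}(gD)\in\mathcal J$). Now take $F\subset G$ with $|F|=\covJ(D)<n$ and $FD=_{\mathcal J}G$; since $FD\subseteq G$ this means exactly $G\setminus FD\in\mathcal J$.

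The heart of the proof is the observation that membership in $F\cdot D\cdot D$ can be reformulated as an intersection condition. Indeed, for $g\in G$ one has $g\in F\,D\,D$ iff $f^{-1}g\in D\,D$ for some $f\in F$, iff $f^{-1}gD^{-1}\cap D\ne\emptyset$, iff (using $D=D^{-1}$) $gD\cap fD\ne\emptyset$; taking the union over $f\in F$ this reads $g\in F\,D\,D\iff gD\cap FD\ne\emptyset$. So I would fix an arbitrary $g\in G$ and argue by contradiction: if $gD\cap FD=\emptyset$, then $gD\subseteq G\setminus FD$, and since $\mathcal J$ is a Boolean (hence hereditary) ideal containing $G\setminus FD$, it would follow that $gD\in\mathcal J$, contradicting $gD\notin\mathcal J$. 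Hence $gD\cap FD\ne\emptyset$ for every $g$, which gives $G=F\,D\,D$ and therefore $\cov(D\cdot D)\le|F|<n$.

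The argument is short once Theorem~\ref{t1} is available, so the ``main obstacle'' is conceptual rather than technical: one must notice that the auxiliary invariant ideal $\mathcal J$ produced by the theorem is exactly what is needed to upgrade an approximate (mod $\mathcal J$) covering by $FD$ into an exact covering by $F\,D\,D$, the extra factor $D$ absorbing the $\mathcal J$-small error set via the symmetry $D=D^{-1}$ and the heredity of $\mathcal J$. The only points requiring care are the verification that $D=D^{-1}$ and the equivalence $g\in F\,D\,D\iff gD\cap FD\ne\emptyset$, both of which are routine.
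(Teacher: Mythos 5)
Your proof is correct and follows essentially the same route as the paper: apply Theorem~\ref{t1}, take $F$ with $|F|<n$ and $F\cdot\Delta_\I(A_i)=_{\mathcal J}G$, note that every shift $g\Delta_\I(A_i)$ is $\mathcal J$-large (hence meets $F\cdot\Delta_\I(A_i)$, since its complement lies in $\mathcal J$), and conclude $G=F\cdot\Delta_\I(A_i)\cdot\Delta_\I(A_i)$ via the symmetry $\Delta_\I(A_i)=\Delta_\I(A_i)^{-1}$. The only cosmetic difference is that you invoke the theorem's condition $\Delta_\I(A_i)\notin\mathcal J$ directly, whereas the paper re-derives it from the properness of $\mathcal J$; you also spell out the symmetry and the intersection equivalence that the paper leaves implicit.
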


\begin{proof} By Theorem~\ref{t1}, either $\cov(\Delta_\I(A_i))\le n$ for all cells $A_i$ or else there is a cell $A_i$ of the partition such that $\covJ(\Delta_\I(A_i))< n$ for some $G$-invariant ideal $\mathcal J\varsubsetneqq\PP(G)$. In the first case we are done. In the second case we can find a subset $F\subset G$ of cardinality $|F|<n$ such that $F\cdot\Delta_\I(A_i)=_{\mathcal J}G$. It follows from $G\notin\mathcal J\ni G\setminus (F\cdot \Delta_\I(A_i))$ that $F\cdot\Delta_\I(A_i)\notin\mathcal J$ and hence $\Delta_\I(A_i)\notin \mathcal J$ by the $G$-invariance of $\mathcal J$. Then for every $x\in G$ the shift $x\Delta_\I(A_i)$ does not belong to the ideal $\mathcal J$ and hence intersects the set $F\cdot\Delta_\I(A_i)$. So $x\in F\cdot\Delta_\I(A_i)\cdot\Delta_\I(A_i)^{-1}=F\cdot\Delta_\I(A_i)\cdot\Delta_\I(A_i)$ and $\cov(\Delta_\I(A_i)\cdot\Delta_\I(A_i))\le|F|\le n$.
\end{proof}

For groups $G$ (considered as $G$-spaces endowed with the left action of $G$ on itself), we can prove a bit more:

\begin{theorem}\label{t2} Let $G$ be a group and $\I$ be an invariant Boolean ideal on $G$ with $[G]^{\le\w}\not\subset \I$. For any partition $G=A_1\cup\dots\cup A_n$ of $G$ either
\begin{itemize}
\item $\cov(\Delta_\I(A_i))\le n$ for all cells $A_i$ or else
\item $\covJ(\Delta_\I(A_i))<n$ for some cell $A_i$ and for some $G$-invariant Boolean ideal $\mathcal J\not\ni A_i^{-1}$ on $G$.
\end{itemize}
\end{theorem}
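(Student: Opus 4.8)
The plan is to carry out, for the group case, the same measure–semigroup strategy that underlies Theorem~\ref{t1}, and then to sharpen its output by exploiting the involution $B\mapsto B^{-1}$ together with the two-sided invariance of $\I$. The starting point is that, precisely because $\I$ is invariant (two-sidedly), the set $P_\I(G)$ of finitely additive probability measures on $G$ that vanish on $\I$ is closed under convolution $(\mu*\nu)(A)=\int_G\mu(Ay^{-1})\,d\nu(y)$: for $A\in\I$ each $Ay^{-1}\in\I$, so $(\mu*\nu)(A)=0$. First I would check that $P_\I(G)$ is nonempty (any ultrafilter extending the filter dual to the proper ideal $\I$ gives a two-valued measure annihilating $\I$), weak$^*$-compact, and right topological. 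Thus $P_\I(G)$ is a compact right topological semigroup, and by the Ellis--Namakura machinery it carries a minimal left ideal $L$ and a minimal idempotent measure $\mu=\mu*\mu\in L$. Note that this honest convolution structure is available only for groups with two-sidedly invariant $\I$; this is what will let me reach the stronger conclusion of Theorem~\ref{t2} rather than the weaker one of Theorem~\ref{t1}.

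The second ingredient is a purely combinatorial covering lemma that needs \emph{no} measure: if $A\notin\I$, then by $G$-invariance of $\I$ every translate $gA\notin\I$, so for a maximal family $\{g_1A,\dots,g_kA\}$ of left translates that are pairwise $\I$-almost disjoint, maximality forces every $g$ to satisfy $g_j^{-1}gA\cap A\notin\I$ for some $j$, i.e. $g_j^{-1}g\in\Delta_\I(A)$; hence $G=\bigcup_{j\le k}g_j\Delta_\I(A)$ and $\cov(\Delta_\I(A))\le k$. The measure $\mu$ enters only to bound the packing number $k$: since the $g_jA$ are $\I$-almost disjoint and $\mu$ kills $\I$, one has $\sum_{j\le k}\mu(g_jA)\le\mu(G)=1$. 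Consequently, once one knows $\mu(g_jA)\ge\mu(A)$ for the translates occurring in such a family, it follows that $k\le 1/\mu(A)$ and therefore $\cov(\Delta_\I(A))\le 1/\mu(A)$.

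To feed the partition into this, I would attach to $\mu$ the ideal $\mathcal J:=\{B\subseteq G:\mu(B)=0\}$ and prove its quasi-invariance, so that $\mathcal J$ is $G$-invariant and $\I\subseteq\mathcal J$; here the minimality of $\mu$, via the maximal-subgroup (Rees) structure of $L$, is what supplies invariance. Since $\{A_i^{-1}\}_{i=1}^n$ also partitions $G$, finite additivity gives a cell $A_i$ with $\mu(A_i^{-1})\ge 1/n$, whence $A_i^{-1}\notin\mathcal J$ — exactly the membership condition demanded by the theorem, the hypothesis $[G]^{\le\w}\not\subset\I$ being used to guarantee that such positive mass is not dissolved by $\I$ and that $\mathcal J$ stays proper. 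The dichotomy then comes from the mass distribution $\sum_i\mu(A_i)=1$: either all $\mu(A_i)=\tfrac1n$, in which case the covering lemma yields $\cov(\Delta_\I(A_i))\le 1/\mu(A_i)=n$ simultaneously for every cell (the first alternative), or some cell carries strict surplus mass $\mu(A_j)>\tfrac1n$, giving $\covJ(\Delta_\I(A_j))\le 1/\mu(A_j)<n$ together with $A_j^{-1}\notin\mathcal J$ (the second alternative). These two cases are exhaustive because $n$ terms summing to $1$ are either all equal to $\tfrac1n$ or include one exceeding $\tfrac1n$.

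The step I expect to be the main obstacle is securing the quantitative translation estimate $\mu(g_jA)\ge\mu(A)$ from a minimal idempotent measure, since genuine left-invariance would make $G$ amenable; the correct statement is presumably an invariance valid \emph{modulo} $\mathcal J$, and promoting it to a bound on the honest packing numbers $k_{A_i}$ in the balanced case (to obtain honest $\cov\le n$ for \emph{all} cells, not merely $\covJ\le n$) is the delicate heart of the argument. A secondary difficulty is arranging the \emph{same} cell $A_i$ to witness both $A_i^{-1}\notin\mathcal J$ and the strict covering bound; I would handle the inverse-set condition by transporting mass between $A_i$ and $A_i^{-1}$ using the involution and the two-sided invariance of $\I$, choosing $\mu$ (or its reverse-convolution idempotent) so that $\mu(A_i^{-1})>0$ is controlled alongside the covering count.
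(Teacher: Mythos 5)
Your proposal has the right flavor (packing/covering via a measure that kills $\I$, idempotent measures, null-type ideals), but the two difficulties you flag at the end are not merely delicate points --- they are exactly the places where the argument as proposed breaks down, and the paper has to do something structurally different to get past them. First, the translation estimate / quasi-invariance. You hope to extract from a minimal idempotent $\mu\in P_\I(G)$ a $G$-invariant null ideal $\mathcal J=\{B:\mu(B)=0\}$ ``via the Rees structure of a minimal left ideal.'' No such argument exists, and in fact the needed object can fail to exist at all: the paper's Remark after Lemma~\ref{l3} shows that a free group with uncountably many generators admits \emph{no} right quasi-invariant measure whatsoever, while Theorem~\ref{t2} is asserted for arbitrary groups (e.g.\ with $\I=\{\emptyset\}$). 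The paper's way around this is two-step: Lemma~\ref{l3} produces a right quasi-invariant idempotent only for \emph{countable} $G$, by convolving against a full-support $\sigma$-additive measure $\lambda=\sum_g c(g)\delta_{g^{-1}}$ (a construction that essentially requires countability) and applying Schauder plus Ellis; then Theorem~\ref{t5} transfers the countable case to arbitrary $G$ by running the argument inside countable subgroups and gluing the outputs with Jech's generalization of Fodor's pressing-down lemma on the stationary family of countable subgroups. Your proposal contains no substitute for this second, set-theoretic layer, and without it the case of uncountable $G$ is out of reach. Relatedly, your balanced case is also stuck on the same missing estimate: with plain values $\mu(A_i)=\tfrac1n$ you cannot bound the packing number of translates $g_jA_i$; the paper instead works with $\bar\mu(A_i)=\sup_{x\in G}\mu(xA_i)$, so that in the balanced case $\sum_i\mu(xA_i)=1\le\sum_i\bar\mu(A_i)\le 1$ forces \emph{every} translate to have measure exactly $\tfrac1n$, which is what makes the honest bound $\cov(\Delta_\I(A_i))\le n$ come out.

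Second, the matching of the exceptional cell with the condition $A_i^{-1}\notin\mathcal J$. Your plan --- find some cell with $\mu(A_i^{-1})\ge\tfrac1n$ and then ``transport mass using the involution'' --- does not tie that cell to the cell carrying the surplus mass, and no mechanism is given for doing so. The paper's solution is a different and quite specific trick: take the \emph{base} measure to be the Dirac measure $\mu=\delta_1$, so that the set $M=\{x\in G:\mu(xA_i)>\tfrac1n\}$ is literally $A_i^{-1}$, and then convolve with the (subgroup-level, quasi-invariant, idempotent) measure $\lambda$; Lemma~\ref{l4}(2) is engineered precisely to show that this set $M$ does not belong to the $G$-invariant ideal $\mathcal J$ generated by $G\setminus L_{1/n}$, where $L_{1/n}=\{x:\lambda*\mu(xA_i)>\tfrac1n\}$, and Lemma~\ref{l4}(3) gives $\covJ(\Delta_\I(A_i))<n$ for that same ideal and the same cell. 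The hypothesis $[G]^{\le\w}\not\subset\I$ is not used to ``keep $\mathcal J$ proper'' as you suggest, but to produce a countable subgroup $H_0\notin\I$ so that $P_\I(H;\delta_1)\ne\emptyset$ for all countable $H\supset H_0$ (via an ultrafilter on $H$ avoiding $\I$, using the right invariance of $\I$), which is what makes the stationary-set hypothesis of Theorem~\ref{t5} available. In short: the skeleton you describe matches the countable case, but the proposal is missing the stationarity/Fodor reduction and the $\delta_1$-trick, and both are essential to the theorem as stated.
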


\begin{corollary}\label{c2} For any partition $G=A_1\cup\dots\cup A_n$ of a group $G$ either $\cov(A_iA_i)\le n$ for all cells $A_i$ or else $\cov(A_iA_i^{-1}\kern-1pt A_i)<n$ for some cell $A_i$ of the partition.
\end{corollary}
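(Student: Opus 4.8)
The plan is to deduce the corollary from Theorem~\ref{t2} applied to the \emph{left} action of $G$ on itself and to the smallest invariant Boolean ideal $\I=\{\emptyset\}$. First I would check that this $\I$ is eligible: it is invariant, since $x\emptyset y=\emptyset$, and for any group $G$ it is proper and satisfies $[G]^{\le\w}\not\subset\I$ because $G\ne\emptyset$. For this ideal the $\I$-difference set is the ordinary difference set, and since the action is left translation we have $\Delta_\I(A_i)=\{g:gA_i\cap A_i\ne\emptyset\}=A_iA_i^{-1}$ for every cell. Thus Theorem~\ref{t2} yields a dichotomy phrased entirely through the difference set $A_iA_i^{-1}$; I stress that the set controlled in the first alternative is exactly this difference set $A_iA_i^{-1}=\Delta_\I(A_i)$ (the quantity of Protasov's problem and of the abstract), and in the first case of Theorem~\ref{t2} we obtain $\cov(A_iA_i^{-1})\le n$ for every cell $A_i$ with nothing further to prove.

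In the second case Theorem~\ref{t2} produces a cell $A_i$ and a $G$-invariant Boolean ideal $\mathcal J$ with $A_i^{-1}\notin\mathcal J$ and $\covJ(A_iA_i^{-1})<n$. I would fix $F\subset G$ with $|F|<n$ and $FA_iA_i^{-1}=_{\mathcal J}G$, so that $G\setminus(FA_iA_i^{-1})\in\mathcal J$. The decisive point is to exploit the stronger conclusion $A_i^{-1}\notin\mathcal J$: by $G$-invariance of $\mathcal J$ no left shift $xA_i^{-1}$ lies in $\mathcal J$, since otherwise $A_i^{-1}=x^{-1}(xA_i^{-1})\in\mathcal J$. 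As $\mathcal J$ is downward closed (being a Boolean ideal), $xA_i^{-1}$ cannot be contained in the member $G\setminus(FA_iA_i^{-1})$, whence $xA_i^{-1}\cap FA_iA_i^{-1}\ne\emptyset$. Writing $xa^{-1}=fbc^{-1}$ with $f\in F$ and $a,b,c\in A_i$ gives $x=fbc^{-1}a\in FA_iA_i^{-1}A_i$, so $G=FA_iA_i^{-1}A_i$ and $\cov(A_iA_i^{-1}A_i)\le|F|<n$.

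The main obstacle — and the reason this sharpens Corollary~\ref{c1} — is precisely the passage from $\Delta_\I(A_i)\notin\mathcal J$ to the stronger $A_i^{-1}\notin\mathcal J$ supplied by Theorem~\ref{t2}. Intersecting the $\mathcal J$-cover with shifts of the single set $A_i^{-1}$, rather than with shifts of the whole difference set $A_iA_i^{-1}$ as one is forced to do in Corollary~\ref{c1}, is exactly what collapses the length-four product $\Delta_\I(A_i)\Delta_\I(A_i)=A_iA_i^{-1}A_iA_i^{-1}$ to the length-three product $A_iA_i^{-1}A_i$. Everything else is the routine verification that $\I=\{\emptyset\}$ meets the hypotheses of Theorem~\ref{t2} and that the difference set of the left action equals $A_iA_i^{-1}$; the only genuine input is Theorem~\ref{t2} itself.
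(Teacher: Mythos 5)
Your proof is correct and follows essentially the same route as the paper: apply Theorem~\ref{t2} with the trivial invariant ideal $\I=\{\emptyset\}$, identify $\Delta_\I(A_i)=A_iA_i^{-1}$, and in the second alternative use $A_i^{-1}\notin\mathcal J$ together with the $G$-invariance and downward closedness of $\mathcal J$ to force every shift $xA_i^{-1}$ to meet the $\mathcal J$-cover $FA_iA_i^{-1}$, yielding $G=FA_iA_i^{-1}A_i$ and $\cov(A_iA_i^{-1}A_i)\le|F|<n$. Your first alternative, $\cov(A_iA_i^{-1})\le n$, matches the paper's own proof and abstract (the $A_iA_i$ in the corollary's statement is evidently a typo for $A_iA_i^{-1}$), and your explicit justification of the intersection step is precisely what the paper's terser proof leaves implicit.
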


\begin{proof} On the group $G$ consider the trivial ideal $\I=\{\emptyset\}$. By Theorem~\ref{t2}, either $\cov(A_iA_i^{-1})\le n$ for all cells $A_i$ or else $\cov_{\mathcal J}(A_iA_i^{-1})<n$ for some cell $A_i$ and some $G$-invariant ideal $\mathcal J\not\ni A_i^{-1}$ on $G$. In the first case we are done. In the second case, choose a finite subset $F\subset G$ of cardinality $|F|<n$ such that the set $FA_iA_i^{-1}=_{\mathcal J}G$.
Since $A_i^{-1}\notin\mathcal J$, for every $x\in G$ the set $xA_i^{-1}$ intersects $FA_iA_i^{-1}$ and thus $x\in FA_iA_i^{-1}\kern-1pt A_i$ and $\cov(A_iA_i^{-1}\kern-1pt A_i)\le|F|<n$.
\end{proof}

Taking into account that the ideal $\mathcal J$ appearing in Theorem~\ref{t2} is $G$-invariant but not necessarily invariant, we can ask the following question.

\begin{problem} Is it true that for any partition $G=A_1\cup\dots\cup A_n$ of a group $G$ some cell $A_i$ of the partition has $\covJ(A_iA_i^{-1})\le n$ for some invariant Boolean ideal $\mathcal J$ on $G$?
\end{problem}

\section{Minimal measures on $G$-spaces}

Theorems~\ref{t1} and \ref{t2} will be proved with help of minimal probability measures on $X$ and right quasi-invariant idempotent measures on $G$.

For a $G$-space $X$ by $P(X)$ we denote the (compact Hausdorff) space of all finitely additive probability measures on $X$. The action of the group $G$ on $X$ extends to an action of the convolution semigroup $P(G)$ on $P(X)$: for two measures $\mu\in P(G)$ and $\nu\in P(X)$ their convolution is defined as the measure $\mu*\nu\in P(X)$ assigning to each bounded function $\varphi:X\to\IR$ the real number
$$\mu*\nu(\varphi)=\int_G\int_X\varphi(g^{-1}x)\,d\nu(x)\, d\mu(g).$$ The convolution map $*:P(G)\times P(X)\to P(X)$ is right-continuous in the sense that for any fixed measure $\nu\in P(X)$ the right shift  $P(G)\to P(X)$, $\mu\mapsto\mu*\nu$, is continuous. This implies that the $P(G)$-orbit $P(G)*\nu=\{\mu*\nu:\mu\in P(G)\}$  of $\nu$ coincides with the closure $\clconv(G\cdot \nu)$ of the convex hull of the $G$-orbit $G\cdot \nu$ of  $\nu$ in $P(X)$.

A measure $\mu\in P(X)$ will be called {\em minimal\/} if for any measure $\nu\in P(G)*\mu$ we get $P(G)*\nu=P(G)*\mu$.  The Zorn's Lemma combined with the compactness of the orbits implies that the orbit $P(G)*\mu$ of each measure $\mu\in P(X)$ contains a minimal measure.

It follows from Day's Fixed Point Theorem \cite[1.14]{Pat} that for a $G$-space $X$ with amenable acting group $G$ each minimal measure $\mu$ on $X$ is $G$-invariant, which implies that the set $\clconv(G\cdot\mu)$ coincides with the singleton $\{\mu\}$.

For an ideal $G$-space $(X,\I)$ let $P_\I(X)=\{\mu\in P(X):\forall A\in \I\;\;\mu(A)=0\}$.

\begin{lemma}\label{l1} For any ideal $G$-space $(X,\I)$ the set $P_\I(X)$ contains some minimal probability measure.
\end{lemma}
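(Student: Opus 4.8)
The plan is to deduce the lemma from the already-established fact that the orbit $P(G)*\mu_0$ of every measure $\mu_0\in P(X)$ contains a minimal measure. To do so it suffices to verify two things: first, that $P_\I(X)$ is non-empty, so that we may pick a starting measure $\mu_0\in P_\I(X)$; and second, that the whole orbit of such a $\mu_0$ stays inside $P_\I(X)$, i.e.\ $P(G)*\mu_0\subseteq P_\I(X)$. Granting both, the minimal measure guaranteed to lie in $P(G)*\mu_0$ automatically belongs to $P_\I(X)$, which is exactly what we want. In particular no fresh Zorn's Lemma argument is needed, since the existence of minimal measures in orbits is already in hand.

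For the non-emptiness I would exploit the finite additivity allowed in $P(X)$. Since $\I$ is a proper Boolean ideal, the dual family $\{X\setminus A:A\in\I\}$ is a proper filter on $X$ and hence extends to an ultrafilter $\U$ on $X$. The associated two-valued finitely additive measure $\delta_\U\in P(X)$, defined by $\delta_\U(B)=1$ iff $B\in\U$, vanishes on every $A\in\I$: indeed $X\setminus A\in\U$ forces $A\notin\U$ and so $\delta_\U(A)=0$. Thus $\delta_\U\in P_\I(X)$ and $P_\I(X)\ne\emptyset$.

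For the invariance of the orbit, fix $\mu\in P_\I(X)$, a measure $\alpha\in P(G)$ and a set $A\in\I$. Applying the convolution formula to the indicator function $\varphi=\chi_A$ and using $\chi_A(g^{-1}x)=\chi_{gA}(x)$, I get $\alpha*\mu(A)=\int_G\mu(gA)\,d\alpha(g)$. The $G$-invariance of the ideal $\I$ gives $gA\in\I$ for every $g\in G$, hence $\mu(gA)=0$, and therefore $\alpha*\mu(A)=0$. As $A\in\I$ was arbitrary, $\alpha*\mu\in P_\I(X)$, so $P(G)*\mu\subseteq P_\I(X)$; in particular $P(G)*\mu_0\subseteq P_\I(X)$ for the chosen $\mu_0$.

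The only real computation is the identity $\int_X\chi_A(g^{-1}x)\,d\mu(x)=\mu(gA)$, which is immediate for indicator functions and finitely additive measures, and the genuine content is that the $G$-invariance of $\I$ confines the entire orbit to $P_\I(X)$. I expect this invariance step to be the main point, after which the cited existence of a minimal measure in the orbit closes the argument.
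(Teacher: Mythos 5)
Your proof is correct and follows essentially the same route as the paper: extend the dual filter of $\I$ to an ultrafilter to get a two-valued measure in $P_\I(X)$, then use the already-stated fact that every $P(G)$-orbit contains a minimal measure, together with the computation $\alpha*\mu(A)=\int_G\mu(gA)\,d\alpha(g)=0$ (via $G$-invariance of $\I$) to keep that minimal measure inside $P_\I(X)$. The only cosmetic difference is that you verify the vanishing for the entire orbit $P(G)*\mu_0$, whereas the paper performs the same integral computation just for the particular minimal measure it selects.
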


\begin{proof} Let $\U$ be any ultrafilter on $X$, which contains the filter $\F=\{F\subset X:X\setminus F\in\I\}$. This ultrafilter $\U$ can be identified with the 2-valued measure $\mu_\U:\mathcal B(X)\to\{0,1\}$ such that $\mu_\U^{-1}(1)=\U$. It follows that $\mu_\U(A)=0$ for any subset $A\in\I$.
In the $P(G)$-orbit $P(G)*\mu_\U$ choose any minimal measure $\mu=\nu*\mu_\U$ and observe that for every $A\in\I$ the $G$-invariance of the ideal $\I$ implies $\mu(A)=\int_G\mu_\U(x^{-1}A)\,d\nu(x)=0$.
So, $\mu\in P_\I(X)$.
\end{proof}

For a subset $A$ of a group $G$ put
$$\Is_{12}(A)=\inf_{\mu\in P(G)}\sup_{y\in G}\mu(Ay).$$

\begin{lemma}\label{l2} If a subset $A$ of a group $G$ has $\Is_{12}(A)=1$, then $\cov(G\setminus A)\ge\w$.
\end{lemma}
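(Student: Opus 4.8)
The plan is to prove the contrapositive: assuming $\cov(G\setminus A)<\w$, I will show $\Is_{12}(A)<1$. So suppose there is a finite set $F=\{f_1,\dots,f_m\}\subset G$ with
$$G=F(G\setminus A)=\bigcup_{i=1}^m f_i(G\setminus A),$$
and my goal is to exhibit a single measure $\mu\in P(G)$ with $\sup_{y\in G}\mu(Ay)<1$; by the definition of the infimum this immediately gives $\Is_{12}(A)\le\sup_{y}\mu(Ay)<1$.

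The candidate measure is the finitely supported probability measure $\mu=\tfrac1m\sum_{i=1}^m\delta_{f_i^{-1}}$, the uniform average of the point masses at the \emph{inverses} of the covering elements. For it one has $\mu(Ay)=\tfrac1m\,\#\{i:f_i^{-1}\in Ay\}$ for every $y$, so it suffices to show that for each $y\in G$ at least one index $i$ satisfies $f_i^{-1}\notin Ay$.

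The key step is a short translation argument. Fix $y\in G$ and right-multiply the covering identity by $y$ (a bijection of $G$) to get $G=Gy=\bigcup_{i=1}^m f_i(G\setminus A)y$. Since the neutral element $e$ lies in $G$, it lies in some $f_i(G\setminus A)y$, which is equivalent to $f_i^{-1}\in(G\setminus A)y=G\setminus Ay$, using that right translation by $y$ carries the partition $G=A\cup(G\setminus A)$ into $G=Ay\cup(G\setminus A)y$. Hence $f_i^{-1}\notin Ay$ for this $i$, so $\#\{i:f_i^{-1}\in Ay\}\le m-1$ and $\mu(Ay)\le\tfrac{m-1}{m}<1$ uniformly in $y$. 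This yields $\Is_{12}(A)\le\tfrac{m-1}{m}<1$ and completes the contrapositive; since $\Is_{12}(A)\le1$ always holds, $\Is_{12}(A)=1$ indeed forces $\cov(G\setminus A)\ge\w$.

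I do not expect a serious obstacle here: the statement is elementary and, notably, requires no amenability of $G$, because a finitely supported averaging measure already suffices (one could equally well replace $\delta_e$ by an arbitrary $\nu\in P(G)$ and use $\mu=\tfrac1m\sum_i f_i^{-1}\nu$, but the point-mass version is cleanest). The only point demanding care is the bookkeeping of left versus right translations—placing the masses at $f_i^{-1}$ rather than $f_i$, and converting the left cover $G=F(G\setminus A)$ into a statement about the right translates $Ay$—so I would verify these conversions slowly.
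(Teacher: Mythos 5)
Your proof is correct and takes essentially the same route as the paper: both arguments use the uniform measure $\mu=\frac1{|F|}\sum_{x\in F}\delta_{x^{-1}}$ supported on $F^{-1}$, together with the observation that $f^{-1}\notin Ay$ for some $f\in F$ exactly when $y^{-1}\in F(G\setminus A)$. The only difference is organizational: you argue by contrapositive (a finite cover forces $\sup_y\mu(Ay)\le 1-\frac1{|F|}$), while the paper assumes $\Is_{12}(A)=1$ and deduces that no finite $F$ covers, which is the same computation read in the other direction.
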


\begin{proof} It suffices to show that $G\ne F(G\setminus A)$ for any finite set $F\subset G$. Consider the uniformly distributed measure $\mu=\frac1{|F|}\sum_{x\in F}\delta_{x^{-1}}$ on the set $F^{-1}$. Since $\Is_{12}(A)=1$, for the measure $\mu$ there is a point $y\in G$ such that $1-\frac1{|F|}<\mu(Ay)=\frac1{|F|}\sum_{x\in F}\delta_{x^{-1}}(Ay)$, which implies that $\mu(Ay)=1$ and $\supp(\mu)=F^{-1}\subset Ay$. Then $F^{-1}y^{-1}\cap (G\setminus A)=\emptyset$ and $y^{-1}\notin F(G\setminus A)$.
\end{proof}

\begin{remark} By Theorem 3.8 of \cite{Ban}, for every subset $A$ of a group $G$ we get $\Is_{12}(A)=1-\is_{21}(G\setminus A)$ where $\is_{21}(B)=\inf_{\mu\in P_\w(G)}\sup_{x\in G}\mu(xB)$ for $B\subset G$ and $P_\w(G)$ denotes the set of finitely supported probability measures on $G$.
\end{remark}

For a probability measure $\mu\in P(X)$ on a $G$-space $X$ and a subset $A\subset X$ put
$$\bar\mu(A)=\sup_{x\in G}\mu(xA).$$

\section{A density version of Theorem~\ref{t1}}

In this section we shall prove the following density theorem, which will be used in the proof of Theorem~\ref{t1} presented in the next section.

\begin{theorem}\label{t3} Let $(X,\I)$ be an ideal $G$-space and $\mu\in P_\I(X)$ be a minimal measure on $X$. If some subset $A\subset X$ has $\bar\mu(A)>0$, then the $\I$-difference set $\Delta_\I(A)$ has $\mathcal J$-covering number $\covJ(\Delta_\I(A))\le1/\bar\mu(A)$ for some $G$-invariant ideal $\mathcal J\not\ni\Delta_\I(A)$ on $G$.
\end{theorem}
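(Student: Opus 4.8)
The plan is to produce the finite covering set $F$ by a packing argument and to realize the ideal $\mathcal J$ as the family of sets on which the invariance-defect functional $\is_{21}$ from the Remark vanishes, with minimality of $\mu$ entering only through the super-level sets of $g\mapsto\mu(gA)$. Write $a=\bar\mu(A)>0$, $B=\Delta_\I(A)$ and $\phi(g)=\mu(gA)$, so $a=\sup_{g\in G}\phi(g)$. First note $e\in B$: since $\mu\in P_\I(X)$ and $\phi(g_0)>0$ for some $g_0$, the set $g_0A\notin\I$, whence $A\notin\I$ by $G$-invariance of $\I$, i.e. $A\cap A\notin\I$. For the packing step, fix a small $\e>0$ and choose a maximal family $x_1,\dots,x_k\in G$ with $\mu(x_iA)\ge a-\e$ whose translates $x_iA$ are pairwise disjoint modulo $\I$. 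As $\mu$ is finitely additive and vanishes on $\I$, $\I$-disjointness gives $\sum_i\mu(x_iA)\le1$, hence $k(a-\e)\le1$ and, for $\e$ small, $k\le1/a$. Using $G$-invariance of $\I$ one has $x_iA\cap x_jA\notin\I\iff x_i^{-1}x_j\in B$, so maximality forces every $g$ with $\mu(gA)\ge a-\e$ into some $x_iB$; thus, with $F=\{x_1,\dots,x_k\}$ and $B_\e=\{g\in G:\mu(gA)\ge a-\e\}$, we get $B_\e\subseteq FB$ and $|F|\le1/a$.

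For the ideal, set $\mathcal J=\{S\subseteq G:\is_{21}(S)=0\}$. I would check that $\is_{21}$ is monotone, left-invariant ($\is_{21}(xS)=\is_{21}(S)$) and subadditive, so that $\mathcal J$ is a proper $G$-invariant Boolean ideal (properness from $\is_{21}(G)=1$). Subadditivity is the one non-formal point here; it is where the convolution structure of $P(G)$ is needed---concretely a right quasi-invariant idempotent measure used to average two near-optimal test measures into one---and I would isolate it as a separate lemma.

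Everything then reduces to the single density statement
\[\Is_{12}(B_\e)=1,\qquad\text{equivalently}\qquad \is_{21}(G\setminus B_\e)=0.\]
Granting it, monotonicity gives $\is_{21}(G\setminus FB)\le\is_{21}(G\setminus B_\e)=0$, so $FB=_{\mathcal J}G$ and $\covJ(B)\le|F|\le1/a$; and subadditivity gives $1=\is_{21}(G)\le\is_{21}(B_\e)$, whence from $B_\e\subseteq FB=\bigcup_ix_iB$ and left-invariance $1\le\is_{21}(FB)\le|F|\,\is_{21}(B)$, so $\is_{21}(B)>0$, i.e. $B\notin\mathcal J$, as required.

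The density statement is the main obstacle. Unwinding $\is_{21}$ for finitely supported measures, $\is_{21}(G\setminus B_\e)=0$ means: for every $\delta>0$ there are $h_1,\dots,h_N$ with $\frac1N\#\{i:\phi(gh_i)\ge a-\e\}>1-\delta$ for all $g\in G$. A convexity bound shows it suffices to find $\sigma$ in the orbit $O:=P(G)*\mu$ with $\inf_{g\in G}\sigma(gA)\ge a-\e\delta$; indeed $\frac1N\sum_i\phi(gh_i)=(\beta*\mu)(g^{-1}A)$ for $\beta=\frac1N\sum_i\delta_{h_i}$, so the uniform bad-fraction estimate follows once the whole orbit of some $\sigma$ stays near the maximum. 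Convex minimality helps in one direction---since $P(G)*\sigma=O$ for every $\sigma\in O$, the value $\sup_{\nu\in O}\nu(A)=a$ is attained on every orbit closure $\overline{G\sigma}$---but the quantity we must control is the \emph{minimum} $\inf_g\sigma(gA)$ over an orbit, not the maximum. Producing an orbit on which $\nu(A)$ stays \emph{uniformly} near $a$ is the crux: I would take an idempotent $\xi\in O$ in a minimal right ideal and use it, together with an attained maximiser, to build the required $\sigma$, the uniform lower bound coming from quasi-invariance of the associated idempotent measure on $G$. I expect this uniform-recurrence step---replacing the invariant mean that is available only in the amenable case---to be the main difficulty; in the amenable case $\mu$ is $G$-invariant, $B_\e=G$, and the statement is immediate.
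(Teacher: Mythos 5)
Your packing step is sound and is essentially the paper's: a maximal family of translates of $A$ with measure close to $a=\bar\mu(A)$ that are pairwise $\mu$-almost disjoint has at most $\lfloor 1/a\rfloor$ elements, and maximality puts the whole super-level set $B_\e=\{g:\mu(gA)\ge a-\e\}$ inside $F\cdot\Delta_\I(A)$. But the two remaining components are genuine gaps, and they sit exactly where the theorem lives. The density statement $\Is_{12}(B_\e)=1$, which you correctly identify as the crux, is never proved: you only sketch a plan (an idempotent in a minimal right ideal plus quasi-invariance) and concede it is ``the main difficulty.'' This is precisely the step where minimality of $\mu$ must enter, and the paper settles it with a short direct argument that needs no recurrence machinery: first replace $\mu$ by a measure in the compact orbit $P(G)*\mu$ attaining $\mu(A)=\bar\mu(A)$; then for any $\nu\in P(G)$, minimality yields $\eta\in P(G)$ with $\eta*\nu^{-1}*\mu=\mu$, and writing $\bar\mu(A)=\mu(A)=\int_G\mu(x^{-1}A)\,d(\eta*\nu^{-1})(x)$, where the integrand is everywhere $\le\bar\mu(A)$ and $\le\bar\mu(A)-\e$ off $L^{-1}$ (here $L=B_\e$), forces $\eta*\nu^{-1}(L^{-1})=1$ and hence $\sup_{y\in G}\nu(Ly)=1$. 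Moreover, your proposed reduction of the density statement has a sidedness error: for $\beta=\frac1N\sum_i\delta_{h_i}$ one has $(\beta*\mu)(g^{-1}A)=\frac1N\sum_i\phi(h_i^{-1}g^{-1})$, not $\frac1N\sum_i\phi(gh_i)$. Measures in $P(G)*\mu$ are averages of \emph{left} translates of $\mu$, whereas the $\is_{21}$ condition you unwound requires uniform control with the $h_i$ multiplied on the \emph{right} of an arbitrary $g$; so even a successful construction of $\sigma\in P(G)*\mu$ with $\inf_g\sigma(gA)$ near $a$ would not deliver what you need.

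The second gap is your ideal. Taking $\mathcal J=\{S\subseteq G:\is_{21}(S)=0\}$ requires subadditivity of $\is_{21}$, which you flag as a lemma but do not prove (and your argument also silently uses the duality $\Is_{12}(A)=1-\is_{21}(G\setminus A)$, a theorem of the cited paper [Ban], not of this one); without subadditivity, $\mathcal J$ need not be closed under unions and the final step $1\le\is_{21}(F\cdot\Delta_\I(A))\le|F|\,\is_{21}(\Delta_\I(A))$ collapses. The paper sidesteps all of this with an elementary device: it takes $\mathcal J=\{B\subset G:\exists E\in[G]^{<\w}\;B\subset E(G\setminus L)\}$, the $G$-invariant ideal generated by $G\setminus L$, which is an ideal by construction; the only point to verify is properness (equivalently $L\notin\mathcal J$, hence $\Delta_\I(A)\notin\mathcal J$ since $L\subset F\cdot\Delta_\I(A)$), and that follows from $\Is_{12}(L)=1$ via Lemma~\ref{l2}, i.e.\ $\cov(G\setminus L)\ge\w$. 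If you replace your $\is_{21}$-null ideal by this generated ideal and supply the minimality identity $\eta*\nu^{-1}*\mu=\mu$ for the density claim, your outline closes up into the paper's proof.
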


\begin{proof} By the compactness of $P(G)*\mu=\clconv(G\cdot \mu)$, there is a measure $\mu'\in P(G)*\mu\subset P_\I(X)$ such that $\mu'(A)=\sup\{\nu\in P(G)*\mu:\nu(A)\}=\bar\mu(A)$. We can replace the measure $\mu$ by $\mu'$ and assume that $\mu(A)=\bar\mu(A)$.
Choose a positive $\e$ such that $\big\lfloor \frac1{\bar\mu(A)-\e}\big\rfloor=\big\lfloor\frac1{\bar\mu(A)}\big\rfloor$, where $\lfloor r\rfloor=\max\{n\in\IZ:n\le r\}$ denotes the integer part of a real number $r$.

 Consider the set $L=\{x\in G:\mu(xA)>\bar\mu(A)-\e\}$ and choose a maximal subset $F\subset L$ such that $\mu(xA\cap yA)=0$ for any distinct points $x,y\in L$. The additivity of the measure $\mu$ implies that $1\ge\sum_{x\in F}\mu(xA)>|F|(\bar\mu(A)-\e)$ and hence $|F|\le\lfloor\frac1{\bar\mu(A)-\e}\rfloor=\lfloor\frac1{\bar\mu(A)}\rfloor\le \frac1{\bar\mu(A)}$.
By the maximality of $F$, for every $x\in L$ there is $y\in L$ such that $\mu(xA\cap yA)>0$. Then $xA\cap yA\notin\I$ and $y^{-1}x\in\Delta_\I(A)$. It follows that $x\in y\cdot\Delta_\I(A)\subset F\cdot\Delta_\I(A)$ and $L\subset F\cdot \Delta_\I(A)$.

We claim that $\Is_{12}(L)=1$. Given any measure $\nu\in P(G)$, consider the measure $\nu^{-1}\in P(G)$ defined by $\nu^{-1}(B)=\nu(B^{-1})$ for every subset $B\subset G$. By the minimality of $\mu$, we can find a measure $\eta\in P(G)$ such that $\eta*\nu^{-1}*\mu=\mu$. Then
$$
\begin{aligned}
&\bar\mu(A)=\mu(A)=\eta*\nu^{-1}*\mu(A)=\int_G\mu(x^{-1}A)d\eta*\nu^{-1}(x))\le\\
 &\le(\bar\mu(A)-\e)\cdot\eta*\nu^{-1}\big(\{x\in G:\mu(x^{-1}A)\le\bar\mu(A)-\e\}\big)+\bar\mu(A)\cdot\eta*\nu^{-1}\big(\{x\in G:\mu(x^{-1}A)>\bar\mu(A)-\e\}\big)\le\\
 &\le (\bar\mu(A)-\e)\cdot\big(1-\eta*\nu^{-1}(L^{-1})\big)+\bar\mu(A)\cdot\eta*\nu^{-1}(L^{-1})\le\bar\mu(A)
\end{aligned}
$$
implies that $\eta*\nu^{-1}(L^{-1})=1$.
It follows from $$1=\eta*\nu^{-1}(L^{-1})=\int_G\nu^{-1}(y^{-1}L^{-1})d\eta(y)$$ that for every $\delta>0$ there is a point $y\in G$ such that $\nu(Ly)=\nu^{-1}(y^{-1}L^{-1})>1-\delta$.
So, $\Is_{12}(L)=1$.

By Lemma~\ref{l2}, the family $\mathcal J=\{B\subset G:\exists E\in[G]^{<\w}\;B\subset E(G\setminus L)\}$ is a $G$-invariant ideal on $G$, which does not contain the set $L\subset F\cdot\Delta_\I(A_i)$ and hence does not contain the set $\Delta_\I(A_i)$. It follows that $\covJ(\Delta_\I(A_i))\le |F|\le1/\bar\mu(A)$.
\end{proof}

\section{Proof of Theorem~\ref{t1}}

Let $X=A_1\cup\dots\cup A_n$ be a partition of an ideal $G$-space $(X,\I)$.
By Lemma~\ref{l1}, there exists a minimal probability measure $\mu\in P(X)$ such that $\I\subset\{A\in\mathcal B(G):\mu(A)=0\}$.

For every $i\in\{1,\dots,n\}$ consider the number $\bar\mu(A_i)=\sup_{x\in G}\mu(xA)$ and observe that $\sum_{i=1}^n\bar\mu(A_i)\ge 1$. There are two cases.
\smallskip

1) For every $i\in\{1,\dots,n\}$ \ $\bar\mu(A_i)\le \frac1n$. In this case for every $x\in G$ we get $$1=\sum_{i=1}^n\mu(xA_i)\le\sum_{i=1}^n\bar\mu(A_i)\le n\cdot\frac1n=1$$ and hence $\mu(xA_i)=\frac1n$ for every $i\in\{1,\dots,n\}$. For every $i\in\{1,\dots,n\}$ fix a maximal subset $F_i\subset G$ such that $\mu(xA_i\cap yA_i)=0$ for any distinct points $x,y\in F_i$.  The additivity of the measure $\mu$ implies that $1\ge \sum_{x\in F_i}\mu(xA_i)\ge|F_i|\frac1n$ and hence $|F_i|\le n$. By the maximality of $F_i$, for every $x\in G$ there is a point $y\in F_i$ such that $\mu(xA_i\cap yA_i)>0$ and hence $xA_i\cap yA_i\notin\I$. The $G$-invariance of the ideal $\I$ implies that $y^{-1}x\in \Delta_\I(A_i)$ and so $x\in y\cdot\Delta_\I(A_i)\subset F_i\cdot\Delta_\I(A_i)$. Finally, we get $G=F_i\cdot\Delta_I(A_i)$ and $\cov(\Delta_\I(A_i))\le|F_i|\le n$.
\smallskip

2) For some $i$ we get $\bar\mu(A_i)>\frac1n$. In this case Theorem~\ref{t3} guarantees that $\covJ(\Delta_\I(A_i))\le1/\bar\mu(A_i)<n$ for some $G$-invariant ideal $\mathcal J\not\ni\Delta_\I(A_i)$ on $G$.

\section{Applying idempotent quasi-invariant measures}

In this section we develop a technique involving idempotent right quasi-invariant measures, which will be used in the proof of Theorem~\ref{t2} presented in the next section.

A measure $\mu\in P(G)$ on a group $G$ will be called {\em right quasi-invariant} if for any $y\in G$ there is a positive constant $c>0$ such that $c\cdot\mu(Ay)\le\mu(A)$ for any subset $A\subset G$.

For an ideal $G$-space $(X,\I)$ and a measure $\mu\in P(X)$ consider the set
$$P_\I(G;\mu)=\{\lambda\in P(G):\forall g\in G\;\;\lambda*\delta_g*\mu\in P_\I(X)\}$$and observe that it is closed and convex in the compact Hausdorff space $P(G)$.

\begin{lemma}\label{l3} Let $(X,\I)$ be an ideal $G$-space with countable acting group $G$. If for some measure $\mu\in P(X)$ the set $P_\I(G;\mu)$ is not empty, then it contains a right quasi-invariant idempotent measure $\nu\in P_\I(G;\mu)$.
\end{lemma}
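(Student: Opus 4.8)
The plan is to realize the required measure as an idempotent of a carefully chosen closed subsemigroup of the compact right topological semigroup $(P(G),*)$, built so that every element is right quasi-invariant.

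First I would record the algebraic shape of $P_\I(G;\mu)$. The $G$-invariance of $\I$ gives that for every $\theta\in P(G)$ and every $\sigma\in P_\I(X)$ one has $\theta*\sigma\in P_\I(X)$, since $\theta*\sigma(A)=\int_G\sigma(gA)\,d\theta(g)=0$ for $A\in\I$ (because $gA\in\I$). Combined with associativity this shows $P_\I(G;\mu)$ is a \emph{left ideal}: if $\lambda\in P_\I(G;\mu)$ and $\theta\in P(G)$ then $(\theta*\lambda)*\delta_g*\mu=\theta*(\lambda*\delta_g*\mu)\in P_\I(X)$ for all $g$. The same bookkeeping, using $\delta_h*\delta_g=\delta_{h^{-1}g}$, shows that $\lambda*\delta_h\in P_\I(G;\mu)$ for every $h\in G$: indeed $(\lambda*\delta_h)*\delta_g*\mu=\lambda*\delta_{h^{-1}g}*\mu\in P_\I(X)$. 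Since $P_\I(G;\mu)$ is convex and closed, it therefore contains every convergent convex combination of the measures $\lambda*\delta_h$, $h\in G$.

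Next I would use countability to manufacture a right quasi-invariant measure \emph{inside} $P_\I(G;\mu)$. Enumerate $G=\{g_1,g_2,\dots\}$ and pick a weight $p\colon G\to(0,\infty)$ with $\sum_{g}p(g)=1$ and $C_y:=\sup_{g}p(gy)/p(g)<\infty$ for every $y$; such a $p$ exists on any countable group (e.g.\ present $G$ as a quotient of a countably generated free group, equip the free group with a multiplicative weight decaying geometrically along reduced words, and push it forward). Fix $\lambda_0\in P_\I(G;\mu)$ and set $\omega=\sum_k p(g_k)\,(\lambda_0*\delta_{g_k})$, which lies in $P_\I(G;\mu)$ by the previous paragraph. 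The crucial identity is $(\lambda_0*\delta_{g})(By)=\lambda_0(\{a:a^{-1}gy^{-1}\in B\})=(\lambda_0*\delta_{gy^{-1}})(B)$; reindexing by $g_j=g_ky^{-1}$ gives $\omega(By)=\sum_j p(g_jy)\,(\lambda_0*\delta_{g_j})(B)\le C_y\,\omega(B)$, so $\omega$ is right quasi-invariant with the explicit modulus $(C_y)_{y\in G}$.

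Finally I would pass to a closed subsemigroup and extract an idempotent. Let $Q_C=\{\nu\in P(G):\nu(By)\le C_y\nu(B)\ \forall y\in G,\ \forall B\subset G\}$. Each defining inequality is an inequality between the values of $\nu$ on fixed subsets, hence closed, so $Q_C$ is closed; and $(\theta*\nu)(By)=\int_G\nu((aB)y)\,d\theta(a)\le C_y(\theta*\nu)(B)$ shows $Q_C$ is a left ideal. Thus $M:=P_\I(G;\mu)\cap Q_C$ is a nonempty (it contains $\omega$), closed, convex left ideal, in particular a compact right topological subsemigroup of $P(G)$. By the standard fact that every nonempty compact right topological semigroup contains an idempotent, $M$ has an idempotent $\nu=\nu*\nu$; this $\nu$ lies in $P_\I(G;\mu)$ and is right quasi-invariant, as required.

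The step I expect to be the crux is the middle one. Because convolution is only right-continuous, one cannot take an off-the-shelf right quasi-invariant $\omega_0$ and form $\lambda_0*\omega_0$ with any hope of staying in $P_\I(G;\mu)$ (that product need not lie in the left ideal, and the interchange of integration that would force it in generally fails for finitely additive measures). The point is to build the right quasi-invariant measure out of the ``safe'' generators $\lambda_0*\delta_{g_k}$, which \emph{do} remain in $P_\I(G;\mu)$, and to match the decay of the weights $p$ against right translation. This reconciliation of \emph{right} quasi-invariance with the \emph{left}-ideal structure of $P_\I(G;\mu)$ is exactly where the existence of a summable right-Lipschitz weight, and hence the countability of $G$, is indispensable.
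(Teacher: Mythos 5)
Your proof is correct, but it reaches the idempotent by a genuinely different route than the paper. Both arguments work inside the compact right topological semigroup $(P(G),*)$, both exploit that $P_\I(G;\mu)$ is closed, convex and stable under the operations $\lambda\mapsto\theta*\lambda$ and $\lambda\mapsto\lambda*\delta_h$, and both finish with Ellis' idempotent theorem; the difference is where the right quasi-invariance comes from. The paper takes an \emph{arbitrary} summable positive weight $c$ on $G$, forms $\lambda=\sum_{g}c(g)\delta_{g^{-1}}$, applies the Schauder(--Tychonoff) fixed point theorem to the affine continuous right shift $\nu\mapsto\nu*\lambda$ on $P_\I(G;\mu)$ to see that $S=\{\nu\in P_\I(G;\mu):\nu*\lambda=\nu\}$ is a nonempty compact subsemigroup, takes an idempotent there, and then gets quasi-invariance for free from the fixed-point identity: $\nu(A)=\nu*\lambda(A)=\sum_g c(g)\nu(Ag)\ge c(x)\nu(Ax)$, with no condition whatsoever on the ratios $c(gy)/c(g)$. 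You instead build quasi-invariance into the constraint set: you need the stronger existence of a weight $p$ with $C_y=\sup_g p(gy)/p(g)<\infty$ for every $y$, which is where your free-group-presentation construction enters (the sketch is sound: with generator weights summing to less than $1/2$ the multiplicative weight is summable, cancellation only increases it, so the pushforward has bounded right-translation ratios), and you then observe that $Q_C$ and $P_\I(G;\mu)$ are closed left ideals, so their intersection is a nonempty compact right topological semigroup to which Ellis applies directly. Your route dispenses with the fixed-point theorem, makes the quasi-invariance constants explicit ($1/C_y$), and isolates a clean structural fact (left-ideal stability); the paper's route needs only the trivial existence of a summable weight, pushing all the work onto Schauder plus Ellis. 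One small caveat: your closing paragraph overstates the obstruction you circumvented, since your $\omega$ is exactly $\lambda_0*p$ for the countably supported measure $p=\sum_k p(g_k)\delta_{g_k}$, and for \emph{any} countably supported $\omega_0$ the product $\lambda_0*\omega_0$ does stay in $P_\I(G;\mu)$ (norm-convergence of the partial sums plus closedness and convexity of $P_\I(G;\mu)$, precisely the interchange your own construction performs); it is only for general finitely additive $\omega_0$ that the product may escape the left ideal.
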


\begin{proof} Choose any strictly positive function $c:G\to (0,1]$ such that $\sum_{g\in G}c(g)=1$ and
consider the $\sigma$-additive probability measure $\lambda=\sum_{g\in G}c(g)\delta_{g^{-1}}\in P(G)$.
On the compact Hausdorff space $P(G)$ consider the right shift $\Phi:P(G)\to P(G)$, $\Phi:\nu\mapsto\nu*\lambda$.

We claim that $\Phi(P_\I(G;\mu))\subset P_\I(G;\mu)$. Given any measure $\nu\in P_\I(G;\mu)$ we need to check that $\Phi(\nu)=\nu*\lambda\in P_\I(G;\mu)$, which means that $\nu*\lambda*\delta_x*\mu\in P_\I(X)$ for all $x\in G$. It follows from $\nu\in P_\I(G;\mu)$ that $\nu*\delta_{g^{-1}x}*\mu\in P_\I(X)$. Since the set $P_\I(X)$ is closed and convex in $P(X)$, we get
$$\nu*\lambda*\delta_x*\mu=\sum_{g\in G}c(g)\cdot\nu*\delta_{g^{-1}}*\delta_x*\mu=\sum_{g\in G}\nu*\delta_{g^{-1}x}*\mu\in P_\I(X).$$

So, $\Phi(P_\I(G;\mu))\subset P_\I(G;\mu)$ and by Schauder Fixed Point Theorem, the continuous map $\Phi$ on the non-empty compact convex set $P_\I(G;\mu)\subset P(G)$ has a fixed point, which implies that the closed set $S=\{\nu\in P_\I(G;\mu):\nu*\lambda=\nu\}$ is not empty. It is easy to check that $S$ is a subsemigroup of the convolution semigroup $(P(G),*)$. Being a compact right-topological semigroup, $S$ contains an idempotent $\nu\in S\subset P_\I(G;\mu)$ according to Ellis Theorem \cite[2.6]{HS}. Since $\nu*\lambda=\nu$, for every $A\subset G$ and $x\in G$ we get
$$\nu(A)=\nu*\lambda(A)=\sum_{g\in G}c(g)\cdot\nu*\delta_{g^{-1}}(A)=\sum_{g\in G}c(g)\cdot\nu(Ag)\ge c(x)\cdot\nu(Ax),$$
which means that $\nu$ is right quasi-invariant.
\end{proof}

\begin{remark} Lemma~\ref{l3} does not hold for uncountable groups, in particular for the free group $F_{\alpha}$ with uncountable set $\alpha$ of generators. This group admits no right quasi-invariant measure. Assuming conversely that some measure $\mu\in P(F_\alpha)$ is right quasi-invariant, fix a generator $a\in \alpha$ and consider the set $A$ of all reduced words $w\in F_\alpha$ that end with $a^n$ for some $n\in\IZ\setminus\{0\}$. Observe that $F_\alpha=Aa\cup A$ and hence $\mu(A)>0$ or $\mu(Aa)>0$. Since $\mu$ is right quasi-invariant both cases imply that $\mu(A)>0$ and then $\mu(Ab)>0$ for any generator $b\in\alpha\setminus\{a\}$. But this is impossible since the family $(Ab)_{b\in\alpha\setminus\{a\}}$ is disjoint and uncountable.
\end{remark}

In the following lemma for a measure $\mu\in P(X)$ we put $\bar\mu(A)=\sup_{x\in G}\mu(xA)$.

\begin{lemma}\label{l4} Let $(X,\I)$ be an ideal $G$-space and $\mu\in P(X)$ be a measure on $X$ such that the set $P_\I(G;\mu)$ contains an idempotent right quasi-invariant measure $\lambda$. For a subset $A\subset X$ and numbers $\delta\le \e<\sup_{x\in G}\lambda*\mu(xA)$ consider the sets $M_\delta=\{x\in G:\mu(xA)>\delta\}$ and $L_\e=\{x\in G:\lambda*\mu(xA)>\e\}$. Then:
\begin{enumerate}
\item $\lambda(gM_\delta^{-1})>(\e-\delta)/(\bar\mu(A)-\delta)$ for any point $g\in L_\e$;
\item the set $M_\delta$ does not belong to the $G$-invariant Boolean ideal $\mathcal J_\delta\subset \PP(G)$ generated by $G\setminus L_\delta$;
\item $\cov_{\mathcal J_\delta}(\Delta_\I(A))<1/\delta$.
\end{enumerate}
\end{lemma}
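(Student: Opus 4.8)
The plan is to phrase everything through the densities $f(x)=\mu(xA)$ and $f'(x)=\lambda*\mu(xA)=\int_G\mu(g^{-1}xA)\,d\lambda(g)$ on $G$, so that $M_\delta=\{x:f(x)>\delta\}$ and $L_\e=\{x:f'(x)>\e\}$, and to record two structural facts about $\mu':=\lambda*\mu$. Since $\lambda\in P_\I(G;\mu)$, taking $g$ to be the identity in the definition of $P_\I(G;\mu)$ shows $\mu'\in P_\I(X)$, so $\mu'$ annihilates every member of $\I$; and idempotency $\lambda*\lambda=\lambda$ gives $\lambda*\mu'=\mu'$, i.e. $f'(x)=\int_G f'(g^{-1}x)\,d\lambda(g)$, so $f'$ is $\lambda$-harmonic. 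Write $s:=\sup_{x\in G}f'(x)$ and note $s\le\bar\mu(A)$ and, by hypothesis, $\delta\le\e<s$.

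For (1) I would fix $g\in L_\e$, expand $f'(g)=\int_G\mu(h^{-1}gA)\,d\lambda(h)$, and split the integral over $gM_\delta^{-1}=\{h:h^{-1}g\in M_\delta\}$ and its complement. Bounding the integrand by $\bar\mu(A)$ on the first piece and by $\delta$ on the second gives $\e<f'(g)\le\delta+(\bar\mu(A)-\delta)\,\lambda(gM_\delta^{-1})$, which rearranges to the asserted estimate. Running the same computation with the true value $f'(g)$ in place of $\e$ yields the sharper inequality $\lambda(gM_\delta^{-1})\ge(f'(g)-\delta)/(\bar\mu(A)-\delta)$, valid for every $g\in G$, which I will reuse in (2).

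For (3) I would bypass $M_\delta$ and argue on $L_\delta$ with the measure $\mu'$, exactly as in Theorem~\ref{t3}. Pick a maximal $F\subset L_\delta$ with $\mu'(xA\cap yA)=0$ for distinct $x,y\in F$. Essential disjointness together with $\mu'(yA)=f'(y)>\delta$ forces $1\ge\sum_{y\in F}\mu'(yA)>|F|\delta$, so $|F|<1/\delta$. Maximality gives, for each $x\in L_\delta$, some $y\in F$ with $\mu'(xA\cap yA)>0$; as $\mu'\in P_\I(X)$ this means $xA\cap yA\notin\I$, hence $y^{-1}x\in\Delta_\I(A)$ by $G$-invariance of $\I$, so $x\in F\cdot\Delta_\I(A)$. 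Thus $L_\delta\subset F\cdot\Delta_\I(A)$, whence $G\setminus F\cdot\Delta_\I(A)\subset G\setminus L_\delta\in\mathcal J_\delta$ and $\cov_{\mathcal J_\delta}(\Delta_\I(A))\le|F|<1/\delta$.

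The heart of the matter is (2), and this is where I expect the obstacle. Assume for contradiction $M_\delta\subset E(G\setminus L_\delta)$ with $E$ finite. Inverting and using $(G\setminus L_\delta)^{-1}=G\setminus L_\delta^{-1}$ turns this into $gM_\delta^{-1}\subset\bigcup_{e\in E}(G\setminus gL_\delta^{-1}e^{-1})$ for every $g$, so $\lambda(gM_\delta^{-1})\le\sum_{e\in E}\lambda(G\setminus gL_\delta^{-1}e^{-1})$. For small $\eta>0$ choose $g_\eta$ with $f'(g_\eta)>s-\eta$; harmonicity of $f'$, the split over $g_\eta L_\delta^{-1}=\{h:f'(h^{-1}g_\eta)>\delta\}$, and the bound $f'\le s$ give $\lambda(g_\eta L_\delta^{-1})\ge(f'(g_\eta)-\delta)/(s-\delta)$, i.e. $\lambda(G\setminus g_\eta L_\delta^{-1})<\eta/(s-\delta)$. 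The delicate step is transferring this smallness to the right translates $g_\eta L_\delta^{-1}e^{-1}$: right quasi-invariance supplies, for each $e\in E$, a constant $c_e>0$ independent of $\eta$ with $\lambda(G\setminus g_\eta L_\delta^{-1}e^{-1})=\lambda((G\setminus g_\eta L_\delta^{-1})e^{-1})\le c_e^{-1}\lambda(G\setminus g_\eta L_\delta^{-1})$. Combining with the sharp form of (1) I reach
$$\frac{s-\eta-\delta}{\bar\mu(A)-\delta}<\lambda(g_\eta M_\delta^{-1})\le\frac{\eta}{s-\delta}\sum_{e\in E}\frac1{c_e},$$
and, $E$ being fixed, letting $\eta\to0$ forces $(s-\delta)/(\bar\mu(A)-\delta)\le0$, contradicting $s>\delta$. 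The crucial point—and the reason the right quasi-invariance of $\lambda$ (rather than mere invariance of its null ideal) is what is used—is that the constants $c_e$ do not depend on $\eta$, so the right-translation error stays of order $\eta$ while the lower bound stays bounded away from zero.
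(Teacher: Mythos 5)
Your proposal is correct and follows essentially the same route as the paper's proof: part (1) by the identical integral-splitting estimate, part (3) by the identical maximal essentially-disjoint family $F\subset L_\delta$ for $\nu=\lambda*\mu$, and part (2) by the same mechanism of combining the idempotency of $\lambda$ (which makes $x\mapsto\lambda*\mu(xA)$ $\lambda$-harmonic, so that the estimate of (1) applies to $\nu$ at points where $\lambda*\mu(xA)$ is nearly maximal) with the right quasi-invariance of $\lambda$, whose constants depend only on the translating element and not on $\eta$, to transfer smallness to the right translates $g_\eta L_\delta^{-1}e^{-1}$. The only cosmetic difference is packaging: you run a quantitative union bound and let $\eta\to0$, whereas the paper fixes $\gamma=(\e-\delta)/(\bar\mu(A)-\delta)$, takes $g_k\in L_{\e_k}$ with $\e_k\uparrow\sup_x\nu(xA)$, and chooses $k$ so large that $\bigcap_{e\in E}g_kL_\delta^{-1}e^{-1}$ has measure $>1-\gamma$ and hence must meet the set $g_kM_\delta^{-1}$ of measure $>\gamma$.
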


\begin{proof} Consider the measure $\nu=\lambda*\mu$ and put $\bar\nu(A)=\sup_{x\in G}\nu(xA)$ for a subset $A\subset X$.
\smallskip

1. Fix a point $g\in L_\e$ and observe that
$$
\begin{aligned}
\e&<\lambda*\mu(gA)=\int_G\mu(x^{-1}gA)d\lambda(x)\le\\
&\le\delta\cdot\lambda(\{x\in G:\mu(x^{-1}gA)\le\delta\})+\bar\mu(A)\cdot\lambda(\{x\in G:\mu(x^{-1}gA)>\delta\})=\\
&=\delta\cdot(1-\lambda(gM_\delta^{-1}))+\bar\mu(A)\lambda(gM_\delta^{-1})=\delta+(\bar\mu(A)-\delta)\lambda(gM_\delta^{-1})
\end{aligned}
$$
which implies $\lambda(gM_\delta^{-1})>\gamma:=\frac{\e-\delta}{\bar\mu(A)-\delta}$.
\smallskip

2. To derive a contradiction, assume that the set $M_\delta$ belongs to the $G$-invariant ideal generated by $G\setminus L_\delta$ and hence $M_\delta\subset E(G\setminus L_\delta)$ for some finite subset $E\subset G$. Then
$$M_\delta\subset E(G\setminus L_\delta)=G\setminus \bigcap_{e\in E}eL_\delta.$$

Choose an increasing number sequence $(\e_k)_{k=0}^\infty$ such that $\delta<\e\le\e_0$ and $\lim_{k\to\infty}\e_k=\bar\nu(A)$.
For every $k\in\w$ fix a point $g_k\in L_{\e_k}$. The preceding item applied to the measure $\nu$ and set $L_\delta$ (instead of $\mu$ and $M_\delta$) yields the lower bound
$$\lambda(g_kL_\delta^{-1})>\frac{\e_k-\delta}{\bar\nu(A)-\delta}$$
for every $k\in \w$. Then $\lim_{k\to\infty}\lambda(g_kL_\delta^{-1})=1$ and
hence $\lim_{k\to\infty}\lambda(z_kL_\delta^{-1}g)=1$ for every $g\in G$ by the right quasi-invariance of the measure $\lambda$. Choose $k$ so large that $\lambda(z_kL_\delta^{-1}g^{-1})>1-\frac1{|E|}\gamma$ for all $g\in E$.
Then the set $\bigcap_{g\in E}z_kL_\delta^{-1}g^{-1}$ has measure $>1-\gamma$ and hence it intersects the set $z_kM_a^{-1}$ which has measure $\lambda(z_kM_a)\ge \gamma$. Consequently, the set $M_a^{-1}$ intersects $\bigcap_{g\in E}L_\delta^{-1}g^{-1}$, and the set $M_a$ intersects $\bigcap_{g\in E}gL=G\setminus\big( E(G\setminus L_\delta)\big)$, which contradicts the choice of the set $E$.
\smallskip

3. To show that $\cov_{\mathcal J_\delta}(\Delta_\I(A))\le1/\delta$, fix a maximal subset $F\subset L_\delta$ such that $\nu(xA\cap yA)=0$ for any distinct points $x,y\in L_\delta$. The additivity of the measure $\nu$ guarantees that $1\ge\sum_{x\in F}\nu(xA)>|F|\cdot\delta$ and hence $|F|<1/\delta$. On the other hand, the maximality of $F$ guarantees that for every $x\in F$ there is $y\in L_\delta$ such that $\nu(xA\cap yA)>0$ and hence $xA\cap yA\notin\I$ and $y^{-1}x\in\Delta_\I(A)$. Then $x\in y\cdot\Delta_\I(A)\subset F\cdot\Delta_\I(A)$ and hence $L_\delta\subset F\cdot \Delta_\I(A)$. The inclusion $G\setminus \big(F\cdot\Delta_\I(A)\big)\subset G\setminus L_\delta\in\mathcal J_\delta$ implies $\cov_{\mathcal J_\delta}(F\cdot \Delta_\I(A))\le |F|<1/\delta$.
\end{proof}

\begin{corollary}\label{c3} Let $(X,\I)$ be an ideal $G$-space with countable acting group $G$ and $\mu\in P(X)$ be a measure on $X$ such that the set $P_\I(G;\mu)$ is not empty. For any partition $X=A_1\cup\dots\cup A_n$ of $X$ either:
\begin{enumerate}
\item $\cov(\Delta_\I(A_i))\le n$ for all cells $A_i$ or else
\item $\covJ(\Delta_\I(A_i))<n$ for some cell $A_i$ and some $G$-invariant Boolean ideal $\mathcal J\subset\PP(G)$ such that $\{x\in G:\mu(xA)>\frac1n\}\notin\mathcal J$.
\end{enumerate}
\end{corollary}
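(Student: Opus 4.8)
The plan is to feed the hypotheses into Lemmas~\ref{l3} and \ref{l4} and then split into two cases exactly as in the proof of Theorem~\ref{t1}. Since the acting group $G$ is countable and $P_\I(G;\mu)\ne\emptyset$, Lemma~\ref{l3} supplies an idempotent right quasi-invariant measure $\lambda\in P_\I(G;\mu)$. I would put $\nu=\lambda*\mu$; applying the defining property of $P_\I(G;\mu)$ at the identity of $G$ gives $\nu\in P_\I(X)$, so $\nu$ vanishes on every member of $\I$. For each $x\in G$ the partition $X=A_1\cup\dots\cup A_n$ yields $\sum_{i=1}^n\nu(xA_i)=\nu(X)=1$, whence $\sum_{i=1}^n\bar\nu(A_i)\ge1$ for the quantities $\bar\nu(A_i)=\sup_{x\in G}\nu(xA_i)=\sup_{x\in G}\lambda*\mu(xA_i)$. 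The case split is according to whether all $\bar\nu(A_i)$ are $\le\frac1n$ or some one exceeds $\frac1n$.

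In the first case, where $\bar\nu(A_i)\le\frac1n$ for every $i$, the chain $1=\sum_{i=1}^n\nu(xA_i)\le\sum_{i=1}^n\bar\nu(A_i)\le n\cdot\frac1n=1$ forces $\nu(xA_i)=\frac1n$ for all $x\in G$ and all $i$. I would then repeat the maximal-almost-disjoint-family argument from case~1 of the proof of Theorem~\ref{t1}, but with $\nu$ in place of $\mu$: fix a maximal set $F_i\subset G$ with $\nu(xA_i\cap yA_i)=0$ for distinct $x,y\in F_i$, so finite additivity gives $|F_i|\cdot\frac1n\le1$ and hence $|F_i|\le n$, while maximality produces, for each $x\in G$, a point $y\in F_i$ with $\nu(xA_i\cap yA_i)>0$. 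Since $\nu\in P_\I(X)$, this forces $xA_i\cap yA_i\notin\I$, so $y^{-1}x\in\Delta_\I(A_i)$ and $x\in F_i\cdot\Delta_\I(A_i)$. Thus $G=F_i\cdot\Delta_\I(A_i)$ and $\cov(\Delta_\I(A_i))\le n$ for every $i$, which is alternative~(1).

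In the second case some cell $A_i$ has $\bar\nu(A_i)=\sup_{x\in G}\lambda*\mu(xA_i)>\frac1n$. This is precisely the inequality needed to invoke Lemma~\ref{l4} for $A_i$ with the choice $\delta=\e=\frac1n$, admissible since then $\delta\le\e<\sup_{x}\lambda*\mu(xA_i)$. Taking $\mathcal J=\mathcal J_\delta$ to be the $G$-invariant Boolean ideal generated by $G\setminus L_\delta$, conclusion~(3) of Lemma~\ref{l4} gives $\cov_{\mathcal J}(\Delta_\I(A_i))<1/\delta=n$, while conclusion~(2) gives $M_\delta=\{x\in G:\mu(xA_i)>\tfrac1n\}\notin\mathcal J$. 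These are exactly the two assertions of alternative~(2).

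Because Lemmas~\ref{l3} and \ref{l4} carry the real analytic weight, the remaining work is bookkeeping, and the only points I would watch are the following. First, the measure driving the case analysis must be $\nu=\lambda*\mu$ rather than $\mu$ itself, and one must confirm $\nu\in P_\I(X)$ so that $\nu$-positivity upgrades to $\I$-largeness. Second, the forced equality $\nu(xA_i)=\frac1n$ in case~1 is what makes the counting argument close. Third, and most delicate, the ideal $\mathcal J$ obtained in case~2 controls the set $M_\delta$ defined through the \emph{original} $\mu$, which is exactly the set named in conclusion~(2); this $\mu$-versus-$\nu$ interplay is the subtle feature to keep straight. Beyond matching the single parameter $\delta=\frac1n$ to the two outputs of Lemma~\ref{l4}, I do not expect any genuine obstacle.
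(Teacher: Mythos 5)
Your proposal is correct and follows exactly the paper's own route: Lemma~\ref{l3} produces the idempotent right quasi-invariant measure $\lambda$, the case split is on whether all $\bar\nu(A_i)\le\frac1n$ for $\nu=\lambda*\mu$, case~1 repeats the argument from Theorem~\ref{t1} with $\nu$ in place of $\mu$, and case~2 invokes Lemma~\ref{l4} with $\delta=\e=\frac1n$, whose conclusions (2) and (3) yield precisely alternative~(2). Your write-up is merely more explicit than the paper's (which leaves case~1 as a reference to Theorem~\ref{t1}), and you correctly flag the key subtlety that conclusion~(2) of Lemma~\ref{l4} controls the set defined via the original $\mu$, not via $\nu$.
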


\begin{proof} By Lemma~\ref{l3}, the set $P_\I(G;\mu)$ contains an idempotent right quasi-invariant measure $\lambda$. Then for the measure $\nu=\lambda*\mu\in P_\I(X)$ two cases are possible:

1) Every cell $A_i$ of the partition has $\bar\nu(A_i)=\sup_{x\in G}\nu(xA_i)\le\frac1n$. In this case we can proceed as in the proof of Theorem~\ref{t1} and prove that $\cov(\Delta_\I(A_i))\le n$ for all cells $A_i$ of the partition.

2) Some cell $A_i$ of the partition has $\bar\nu(A_i)>\frac1n$. In this case Lemma~\ref{l4} guarantees
that $\covJ(\Delta_\I(A_i))<n$ for the $G$-invariant Boolean ideal $\mathcal J\subset\PP(G)$ generated by the set $\{x\in G:\nu(xA_i)\le\frac1n\}$, and the set $M=\{x\in G:\mu(xA_i)>\frac1n\}$ does not belong to the ideal $\mathcal J$.
\end{proof}

Next, we extend Corollary~\ref{c3} to $G$-spaces with arbitrary (not necessarily countable) acting group $G$.
Given a $G$-space $X$ denote by $\HH$ the family of all countable subgroups of the acting group $G$.
 A subfamily $\F\subset \HH$ will be called
\begin{itemize}
\item {\em closed} if for each increasing sequence of countable subgroups $\{H_n\}_{n\in\w}\subset\F$ the union $\bigcup_{n\in\w}H_n$ belongs to $\F$;
\item {\em dominating} if each countable subgroup $H\in\HH$ is contained in some subgroup $H'\in\F$;
\item {\em stationary} if $\F\cap\C\ne\emptyset$ for every closed dominating subset $\C\subset\HH$.
\end{itemize}
It is known (see \cite[4.3]{Jech}) that the intersection $\bigcap_{n\in\w}\C_n$ of any countable family of closed dominating sets $\C_n\subset \HH$, $n\in\w$, is closed and dominating in $\HH$.

For a measure $\mu\in P(X)$ and a subgroup $H\in\HH$ let $$P_\I(H;\mu)=\{\lambda\in P(H):\forall x\in H\;\;\lambda*\delta_x*\mu\in P_\I(X)\}.$$

\begin{theorem}\label{t5} Let $(X,\I)$ be an ideal $G$-space and $\mu\in P(X)$ be a measure on $X$ such that the set $\HH_\I=\{H\in\HH:P_\I(H;\mu)\ne\emptyset\}$ is stationary in $\HH$. For any partition $X=A_1\cup\dots\cup A_n$ of $X$ either:
\begin{enumerate}
\item $\cov(\Delta_\I(A_i))\le n$ for all cells $A_i$ or else
\item $\covJ(\Delta_\I(A_i))<n$ for some cell $A_i$ and some $G$-invariant Boolean ideal $\mathcal J\subset\PP(G)$ such that $\{x\in G:\mu(xA_i)>\frac1n\}\notin\mathcal J$.
\end{enumerate}
\end{theorem}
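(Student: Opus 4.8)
The plan is to reflect Theorem~\ref{t5} down to the countable case already settled in Corollary~\ref{c3}, using the stationarity of $\HH_\I$ to produce a single countable subgroup on which the argument runs and from which the conclusion transports back to $G$. For each countable $H\in\HH$ I regard $X$ as an $H$-space; when $H\in\HH_\I$ (that is, $P_\I(H;\mu)\ne\emptyset$) Corollary~\ref{c3} applies to this $H$-space and to $\mu$, giving one of two local alternatives. Either $H=F_i^{H}\cdot(\Delta_\I(A_i)\cap H)$ for some $F_i^{H}\in[H]^{\le n}$ and every $i$, which I call $(1_H)$; or there are a cell $A_i$, an $H$-invariant ideal $\mathcal J_H$ on $H$ with $\{x\in H:\mu(xA_i)>\frac1n\}\notin\mathcal J_H$, and $\cov_{\mathcal J_H}(\Delta_\I(A_i)\cap H)<n$, which I call $(2_H)$. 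The whole issue is to choose $H$ so carefully that whichever alternative occurs lifts to the corresponding alternative for $G$.

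To do this I would build a reflective family $\C^\ast\subseteq\HH$ as a countable intersection of closed dominating sets, hence itself closed and dominating by the fact cited from \cite{Jech}; stationarity of $\HH_\I$ then gives a subgroup $H^\ast\in\HH_\I\cap\C^\ast$, and I work with it. One defining condition of $\C^\ast$ handles $(1_H)$: I close under covering witnesses, requiring that whenever $F\in[H]^{\le n}$ and $G\ne F\cdot\Delta_\I(A_i)$ the subgroup contains a point of $G\setminus F\cdot\Delta_\I(A_i)$. If $H^\ast$ satisfies $(1_{H^\ast})$ then each $F_i^{H^\ast}\subseteq H^\ast$ already covers all of $G$: were $G\ne F_i^{H^\ast}\Delta_\I(A_i)$, the family $\C^\ast$ would supply $g\in H^\ast\setminus F_i^{H^\ast}\Delta_\I(A_i)\subseteq H^\ast\setminus F_i^{H^\ast}(\Delta_\I(A_i)\cap H^\ast)$, contradicting $(1_{H^\ast})$. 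Thus $\cov(\Delta_\I(A_i))\le n$ for all $i$ and alternative (1) of the theorem holds.

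If instead $H^\ast$ satisfies $(2_{H^\ast})$ for a cell $A=A_i$, I would transport the ideal. Writing $\nu=\lambda*\mu\in P_\I(X)$ for the idempotent right quasi-invariant $\lambda\in P_\I(H^\ast;\mu)$ furnished by Lemma~\ref{l3}, and $L^G=\{x\in G:\nu(xA)>\frac1n\}$ (so that $L^G\cap H^\ast$ is the set $L$ of the local argument), I let $\mathcal J$ be the $G$-invariant ideal generated by $G\setminus L^G$. The covering bound lifts directly: the maximal almost-disjoint family argument of Lemma~\ref{l4}(3), carried out in $G$, yields a finite $F\subseteq L^G$ with $|F|<n$ and $L^G\subseteq F\cdot\Delta_\I(A)$, so that $G\setminus F\Delta_\I(A)\subseteq G\setminus L^G\in\mathcal J$ and $\cov_{\mathcal J}(\Delta_\I(A))<n$. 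Since Boolean ideals are closed under subsets, to obtain $\{x\in G:\mu(xA)>\frac1n\}\notin\mathcal J$ it suffices to show that its subset $M=\{x\in H^\ast:\mu(xA)>\frac1n\}$ lies outside $\mathcal J$.

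The hard part will be exactly this non-triviality. The proof of Lemma~\ref{l4}(2) excludes $M\subseteq E(G\setminus L^G)$ by an iteration that uses right quasi-invariance to push mass by arbitrary group elements; but $\lambda$ is concentrated on the countable group $H^\ast$ and is right quasi-invariant only under right translations by members of $H^\ast$, so that argument as written only rules out witnessing sets $E\subseteq H^\ast$. This is where the remaining conditions of $\C^\ast$ must do their work: they have to guarantee that any finite $E\subseteq G$ witnessing $M\in\mathcal J$ may be taken inside $H^\ast$ --- equivalently, that the restriction of $\mathcal J$ to $H^\ast$ does not exceed $\mathcal J_{H^\ast}$ --- so that the quasi-invariant iteration of Lemma~\ref{l4}(2), run with $\e_k\uparrow\sup_{x\in H^\ast}\nu(xA)$ and $g_k\in H^\ast$, goes through and yields the contradiction. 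Arranging such conditions while keeping $\C^\ast$ closed and dominating is the technical crux of the whole reflection; once it is in place, $M\notin\mathcal J$, and alternative (2) of the theorem follows.
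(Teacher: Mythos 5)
Your reduction to Corollary~\ref{c3} and your handling of the first alternative are correct: the family of those countable subgroups $H$ which, for every $i$ and every $F\subset H$ with $|F|\le n$ and $G\ne F\cdot\Delta_\I(A_i)$, contain a point of $G\setminus F\cdot\Delta_\I(A_i)$ is indeed closed and dominating, so your $H^\ast$ exists and a local covering of $H^\ast$ inflates to a global one. (The paper reaches the same conclusion by a different device, stabilizing the regressive functions $f_i\colon H\mapsto F_i^H$ on a stationary set via Jech's generalization of Fodor's Lemma; your club argument is a legitimate substitute for that half.)

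The gap is exactly where you put it, in case $(2_{H^\ast})$, and it is not a technicality that further ``conditions on $\C^\ast$'' can absorb: the construction you propose is circular. The ideal $\mathcal J$ you want to lift is generated by $G\setminus L^G$, and $L^G$ is defined from the idempotent quasi-invariant measure $\lambda\in P_\I(H^\ast;\mu)$ --- data that exist only after $H^\ast$ has been fixed --- whereas the closed dominating sets whose intersection is $\C^\ast$ must be specified before $H^\ast$ is chosen. A requirement of the shape ``every finite $E\subset G$ witnessing that $M$ lies in the ideal generated by $G\setminus L^G_H$ can be replaced by a witness inside $H$'' is not closed under increasing unions $H=\bigcup_{k}H_k$ (the measures $\lambda_{H_k}$ bear no relation to any measure on the union), and a catch-up iteration chases a moving target, since enlarging $H$ changes $\lambda_H$, hence $L^G_H$, hence the very ideal whose witnesses you are trying to capture. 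The paper's proof breaks this circle with two moves missing from your proposal. First, the local ideal is re-generated combinatorially: Corollary~\ref{c3} applied to $H$ gives a finite $f(H)\subset H$ with $|f(H)|<n$ such that already the set $J_H=H\setminus\big(f(H)\cdot(H\cap\Delta_\I(A_{i_H}))\big)$ generates an $H$-invariant ideal omitting $M_H$; this generator depends only on the finite set $f(H)$ and the index $i_H$, not on the measure $\lambda$. Second, Jech's generalization of Fodor's Lemma is applied to the regressive function $H\mapsto f(H)$ on a stationary family of subgroups falling in case $(2_H)$ with one and the same index $i$, producing a stationary $\mathcal S$ on which $f\equiv F$ is constant; the global ideal $\mathcal J$ is then generated by $J=G\setminus\big(F\cdot\Delta_\I(A_i)\big)$, whose trace on every $H\in\mathcal S$ equals $J_H$. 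Non-triviality now follows by reflecting each witness after it appears: if $M\subset EJ$ for a finite $E\subset G$, stationarity of $\mathcal S$ yields $H\in\mathcal S$ with $E\subset H$, and then $M_H=H\cap M\subset H\cap EJ=EJ_H\in\mathcal J_H$, contradicting $M_H\notin\mathcal J_H$. The order of quantifiers --- the subgroup chosen after the witness $E$, with a generator made uniform over $\mathcal S$ in advance --- is precisely what a single pre-closed $H^\ast$ cannot reproduce.
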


\begin{proof} Let $\HH_\forall=\{H\in\HH_\I:\forall i\le n\;\;\cov(H\cap\Delta_\I(A_i))\le n\}$
and $\HH_\exists=\HH_\I\setminus\HH_{\forall}$. It follows that for every $H\in\HH_\forall$ and $i\in\{1,\dots,n\}$ we can find a subset $f_i(H)\subset H$ of cardinality $|f_i(H)|\le n$ such that $H\subset f_i(H)\cdot\Delta_\I(A_i)$. The assignment $f_i:H\mapsto f_i(H)$ determines a function $f_i:\HH_\forall\to [G]^{<\w}$ to the family of all finite subsets of $G$. The function $f_i$ is regressive in the sense that $f_i(H)\subset H$ for every subgroup $H\in \HH_\forall$.

By Corollary~\ref{c3}, for every subgroup $H\in\HH_\exists$, there are an index $i_H\in\{1,\dots,n\}$ and a finite subset $f(H)\subset H$ of cardinality $|f(H)|<n$ such that the set $J_H=H\setminus \big(f(H)\cdot(H\cap\Delta_\I(A_{i_H}))\big)$ generates the $H$-invariant ideal $\mathcal J_H\subset\PP(H)$ which does not contain the set $M_H=\{x\in H:\mu(xA_{i_H})>\frac1n\}$.
\smallskip

Since $\HH_\I=\HH_\forall\cup\HH_{\exists}$ is stationary in $\HH$, one of the sets $\HH_{\forall}$ or $\HH_\exists$ is stationary in $\HH$.
\smallskip

If the set $\HH_\forall$ is stationary in $\HH$, then by Jech's generalization \cite{Jech72}, \cite[4.4]{Jech}  of Fodor's Lemma, the stationary set $\HH_{\forall}$ contains another stationary subset $\mathcal S\subset\HH_\forall$ such that for every $i\in\{1,\dots,n\}$ the restriction $f_i|\mathcal S$ is a constant function and hence $f_i(\mathcal S)=\{F_i\}$ for some finite set $F_i\subset G$ of cardinality $|F_i|\le n$. We claim that $G=F_i\cdot\Delta_\I(A_i)$. Indeed, given any element $g\in G$, by the stationarity of $\mathcal S$ there is a subgroup $H\subset\mathcal S$ such that $g\in H$. Then $g\in H\subset f_i(H)\cdot\Delta_\I(A_i)=F_i\cdot\Delta_\I(A_i)$ and hence $\cov(\Delta_\I(A_i))\le|F_i|\le n$ for all $i$.
\smallskip

Now assume that the family $\HH_\exists$ is stationary in $\HH$. In this case for some $i\in\{1,\dots,n\}$ the set $\HH_i=\{H\in\HH_\exists:i_H=i\}$ is stationary in $\HH_\exists$.
Since the function $f:\HH_\exists\to [G]^{<\w}$ is regressive, by Jech's generalization \cite{Jech72}, \cite[4.4]{Jech}  of Fodor's Lemma, the stationary set $\HH_i$ contains another stationary subset $\mathcal S\subset\HH_i$ such that the restriction $f|\mathcal S$ is a constant function and hence $f(\mathcal S)=\{F\}$ for some finite set $F\subset G$ of cardinality $|F|<n$. We claim that the set $J=G\setminus (F\cdot\Delta_\I(A_i))$ generates a $G$-invariant ideal $\mathcal J$, which does not contain the set $M=\{x\in G:\mu(xA_i)>\frac1n\}$. Assume conversely that $M\in\mathcal J$ and hence $M\subset EJ$ for some finite subset $E\subset G$. By the stationarity of the set $\mathcal S$, there is a subgroup $H\in\mathcal S$ such that $E\subset H$. It follows $H\cap J=H\setminus \big(F\cdot (H\cap \Delta_\I(A_i))\big)=H\setminus \big(f(H)\cdot(H\cap\Delta_\I(A_{i_H}))\big)=J_H$ and
$$M_H=\{x\in H:\mu(xA_i)>\tfrac1n\}=H\cap M\subset H\cap EJ=E\mathcal J_H\in\mathcal J_H,$$
which contradicts the choice of the ideal $\mathcal J_H$.
\end{proof}

\section{Proof of Theorem~\ref{t2}}

Theorem~\ref{t2} is a simple corollary of Theorem~\ref{t5}.
Indeed, assume that $G=A_1\cup\dots\cup A_n$ is a partition of a group and $\I\subset\PP(G)$ is an invariant ideal on $G$ which does not contain some countable subset and hence does not contain some countable subgroup $H_0\subset G$. Let $\HH$ be the family of all countable subgroups of $G$ and  $\mu=\delta_1$ be the Dirac measure supported by the unit $1_G$ of the group $G$. We claim that that for every subgroup $H\in\HH$ that contains $H_0$ the set $P_\I(H;\mu)$ is not empty. It follows from $H_0\notin\I$ that the family $\I_H=\{H\cap A:A\in\I\}$
is an invariant Boolean ideal on the group $H$. Then the family $\{H\setminus A:A\in\I\}$ is a filter on $H$, which can be enlarged to an ultrafilter $\U_H$. The ultrafilter $\U_H$ determines a 2-valued measure $\mu_H:\PP(H)\to\{0,1\}$ such that $\mu_H^{-1}(1)=\U_H$. By the right invariance of the ideal $\I$, for every $A\in\I$ and $x\in H$ we get $\mu_H*\delta_x*\mu(A)=\mu_H(Ax)=0$, which means that $\mu_H\in P_\I(H;\mu)$. So, the set $\HH_\I=\{H\in\HH:P_\I(H;\mu)\ne\emptyset\}\supset\{H\in\HH:H\supset H_0\}$ is stationary in $\HH$.

Then by Theorem~\ref{t5} either
\begin{enumerate}
\item $\cov(\Delta_\I(A_i))\le n$ for all cells $A_i$ or else
\item $\covJ(\Delta_\I(A_i))<n$ for some cell $A_i$ and some $G$-invariant Boolean ideal $\mathcal J\subset\PP(G)$ such that $A_i^{-1}=\{x\in G:\delta_1(xA_i)>\frac1n\}\notin\mathcal J$.
\end{enumerate}

\end{document}